\newcommand*{\rom}[1]{\expandafter\@slowromancap\romannumeral #1@}
\title{Sobolev versus homogeneous Sobolev extension \rom{2}}
\author{Pekka Koskela, Riddhi Mishra  and Zheng Zhu}
\address{Pekka Koskela\\
Department of Mathematics and Statistics\\,
University of Jyv\"askyl\"a, P.O. Box 35 (MaD),
FI-40014, Jyv\"askyl\"a, Finland}
\email{\tt pekka.j.koskela@jyu.fi}
\address{Riddhi Mishra\\
Department of Mathematics and Statistics\\
University of Jyv\"askyl\"a, P.O. Box 35 (MaD),
FI-40014, Jyv\"askyl\"a, Finland}
\email{\tt riddhi.r.mishra@jyu.fi}
\address{Zheng Zhu\\
School of Mathematical Science\\
Beihang University\\
        Beijing 102206\\
        P. R. China}
\email{\tt zhzhu@buaa.edu.cn}
\numberwithin{equation}{section}
\long\def\colred#1\endred{{\color{red}#1}}
\long\def\colgreen#1\endgreen{{\color{green}#1}}
\long\def\colmagenta#1\endmagenta{{\color{magenta}#1}}
\long\def\colblue#1\endblue{{\color{blue}#1}}
\long\def\colyellow#1\endyellow{{\color{yellow}#1}}
\theoremstyle{plain}
\newtheorem{thm}{Theorem}[section]
\newtheorem{lem}{Lemma}[section]
\newtheorem{defn}{Definition}[section]
\newtheorem{pro}{Proposition}[section]
\numberwithin{equation}{section}
\theoremstyle{remark}
\newtheorem{remark}[equation]{Remark}
\theoremstyle{definition}
\newtheorem*{question*}{Question}
\subjclass[2010]{46E35, 30L99}
\thanks{The first two authors have been supported  by the Academy of Finland (project No. 323960). The third author has been supported by the NSFC grant (No. 12301111) and “the Fundamental Research Funds for the Central Universities” in Beihang University and the Beijing Natural Science Foundation (No. 1242007).}
\newcounter{prob}
\def\rr{{\mathbb R}}
\def\rn{{{\rr}^n}}
\def\fz{\infty}
\def\boz{{\Omega}}
\def\bint{{\ifinner\rlap{\bf\kern.25em--}
\int\else\rlap{\bf\kern.45em--}\int\fi}\ignorespaces}
\def\bbint{{\ifinner\rlap{\bf\kern.25em--}
\hspace{0.078cm}\int\else\rlap{\bf\kern.45em--}\int\fi}\ignorespaces}
\def\diam{{\mathop\mathrm{\,diam\,}}}
\def\r{\right}
\def\lf{\left}
\def\XXint#1#2#3{{\setbox0=\hbox{$#1{#2#3}{\int}$ }
\vcenter{\hbox{$#2#3$ }}\kern-.58\wd0}}
\newcommand{\barint}{
\rule[.036in]{.12in}{.009in}\kern-.16in \displaystyle\int }
\def\vint_#1{\mathchoice%
        {\mathop{\kern 0.2em\vrule width 0.6em height 0.69678ex depth -0.58065ex
                \kern -0.8em \intop}\nolimits_{\kern -0.4em#1}}%
        {\mathop{\kern 0.1em\vrule width 0.5em height 0.69678ex depth -0.60387ex
                \kern -0.6em \intop}\nolimits_{#1}}%
        {\mathop{\kern 0.1em\vrule width 0.5em height 0.69678ex
            depth -0.60387ex
                \kern -0.6em \intop}\nolimits_{#1}}%
        {\mathop{\kern 0.1em\vrule width 0.5em height 0.69678ex depth -0.60387ex
                \kern -0.6em \intop}\nolimits_{#1}}}
\def\vintslides_#1{\mathchoice%
        {\mathop{\kern 0.1em\vrule width 0.5em height 0.697ex depth -0.581ex
                \kern -0.6em \intop}\nolimits_{\kern -0.4em#1}}%
        {\mathop{\kern 0.1em\vrule width 0.3em height 0.697ex depth -0.604ex
                \kern -0.4em \intop}\nolimits_{#1}}%
        {\mathop{\kern 0.1em\vrule width 0.3em height 0.697ex depth -0.604ex
                \kern -0.4em \intop}\nolimits_{#1}}%
        {\mathop{\kern 0.1em\vrule width 0.3em height 0.697ex depth -0.604ex
                \kern -0.4em \intop}\nolimits_{#1}}}
\begin{document}
\maketitle
\begin{abstract}
We study the relationship between Sobolev extension domains and homogeneous Sobolev extension domains. Precisely, for a certain range of exponents $p$ and $q$, we construct a $(W^{1, p}, W^{1, q})$-extension domain which is not an $(L^{1, p}, L^{1, q})$-extension domain.
\end{abstract}

\section{Introduction}
Let $\mathcal X(\boz)$ be a (semi-)normed function space defined on a domain $\boz\subset\rn$ and $\mathcal Y(\rn)$ be a (semi-)normed function space defined on $\rn$. We say that $\boz\subset\rn$ is a $(\mathcal X, \mathcal Y)$-extension domain, if there exists a bounded extension operator
\[E:\mathcal X(\boz)\to\mathcal Y(\rn)\]
in the sense that for every $u\in\mathcal X(\boz)$, there exists $E(u)\in\mathcal Y(\rn)$ with 
\[E(u)\big|_\boz\equiv u\ {\rm and}\ \|E(u)\|_{\mathcal Y}\leq C\|u\|_{\mathcal X}\]
for a positive constant $C>1$ independent of $u$.

Partial motivation for the study of Sobolev extensions comes from PDEs (see, for example, \cite{Mazya}). In \cite{calderon, stein}, Calder\'on and Stein proved that for a Lipschitz domain $\boz\subset\rn$, there always exists a bounded linear extension operator 
\[E:W^{k,p}(\boz)\to W^{k, p}(\rn),\]
for each $k\geq 1$ and all $1\leq p\leq\fz$. Here $W^{k, p}(\boz)$ is the Banach space of all $L^p$-integrable functions whose distributional derivatives up to order $k$ belong to $L^p(\boz)$. In \cite{Jones:acta}, Jones introduced the notation of $(\epsilon, \delta)$-domains and generalized this result to the class of $(\epsilon, \delta)$-domains. It was proved that for every $(\epsilon, \delta)$-domain, there always exists a bounded linear extension operator
\[E:W^{k, p}(\boz)\to W^{k,p}(\rn)\] 
for each $k\geq 1$ and all $1\leq p\leq\fz$.

By the results in \cite{HKT:JFA} from Haj\l{}asz, Koskela and Tuominen, for $1\leq p<\fz$, a $(W^{1, p}, W^{1, p})$-extension domain must be Ahlfors regular in the sense that 
\begin{equation}\label{eq:ahlfors}
|B(x, r)\cap\boz|\geq C|B(x, r)|
\end{equation}
for every $x\in\overline\boz$ and all $0<r<\min\lf\{1, \frac{1}{4}\diam(\boz)\r\}$ with a constant $C$ independent of $x, r$. This excludes lots of domains, for example outward cuspidal domains which are domains with one singular point on the boundary. Sobolev extension for outward cuspidal domains has been studied in detail by several mathematicians like Maz'ya, Poborchi, Gol'dshtein and so on, for example, see \cite{GS:1982, Mazya, Mazya1, Mazya2, Mazya3}. Outward cuspidal domains are $(W^{1, p}, W^{1, q})$-extension domains for some $q,p$ with $q<p$ depending on the orders of the cusps. Recently, Koskela and Zhu proved that for $1\leq q<n-1$ and $(n-1)q/(n-1-q)\leq p<\fz$, an arbitrary outward cuspidal domain is a $(W^{1, p}, W^{1, q})$-extension domain. See \cite{KZ:cusp}. This is a limit case of the results mentioned above. 

In PDEs, people usually only care about the integrability of derivatives, for example, see \cite{M}. Hence it is essential to study the homogeneous Sobolev space $L^{1, p}(\boz)$. Here $L^{1, p}(\boz)$ is the semi-normed space of all locally $L^1$-integrable functions whose first order distributional derivatives belong to $L^p(\boz)$. Obviously, the Sobolev space $W^{1, p}(\boz)$ is always a subspace of the homogeneous Sobolev space $L^{1, p}(\boz)$. Conversely, for sufficiently nice domains, for example for balls, the homogeneous Sobolev space $L^{1, p}(\boz)$ coincides with the Sobolev space $W^{1, p}(\boz)$ as a set of functions. More generally, this holds for $p$-Poincar\'e domains. Recall that a bounded domain $\boz\subset\rn$ is called a $p$-Poincar\'e domain for $1\leq p <\infty$, if the global $(p, p)$-Poincar\'e inequality 
\begin{equation}\label{eq:p-poin}
\int_\boz|u(x)-u_\boz|^pdx\leq C\int_\boz|\nabla u(x)|^pdx
\end{equation}
holds for every $u\in W^{1, p}(\boz)$. For studies on $(L^{1,p},L^{1,p})$-extension domains, see e.g. \cite{vgl}, \cite{Jones:acta}.

Based on the above, it is natural to inquire for the relationship between $(W^{1,p},W^{1,q})$ and $(L^{1,p}, L^{1,q})$ extensions. In \cite[Section 4]{HKjam} it was proved that for $1<p<\fz$, a bounded domain is a $(W^{1, p}, W^{1, p})$-extension domain if and only if it is a $(L^{1, p}, L^{1, p})$-extension domain.

Towards the case $q<p$, let us define
\begin{equation}\label{eq:dual}
q^\star:=\begin{cases}
\frac{nq}{n-q},\ \ &\ {\rm if}\ 1\leq q<n,\\
\fz,\ \ &\ {\rm if}\ n\leq q<\fz.
\end{cases}
\end{equation}
{In \cite{KMZ:arxiv}, for a bounded domain $\Omega$, we showed that if $\Omega$ is an $(L^{1,p},L^{1,p})$-extension domain, then it is also a $(W^{1,p},W^{1,p})$-extension domain and the converse holds if $q\leq p<q^\star$.}
\begin{pro}\label{th:StoH}
Let $1\leq q\leq p<q^\star$. Then a bounded $(W^{1, p}, W^{1, q})$-extension domain is also an $(L^{1, p}, L^{1, q})$-extension domain. On the other hand, for $1\leq q <n$ and $q^* <p\leq n,$ there exists a bounded $(W^{1,p},W^{1,q})$-extension domain, which is not an $(L^{1,p},L^{1,q})$-extension domain. 
\end{pro}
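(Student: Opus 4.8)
Part 1 of the statement is contained in \cite{KMZ:arxiv}, so I will only discuss the second assertion, which is a construction. The plan is to build $\Omega$ as a ``rooms and corridors'' domain accumulating at the origin and to arrange the parameters so that the condition $q^{\star}<p$ is exactly what makes the extension of genuine Sobolev functions possible, while the extra integrability in $L^{1,p}$ over $W^{1,p}$ is precisely what fails. Concretely: fix a ball $B_0$ at positive distance from the origin; for $k\ge1$ let $R_k=B(c_k,r_k)$ be a ``room'' with $|c_k|\simeq 2^{-k}$ and $r_k=2^{-\alpha k}$; join $R_k$ to $R_{k+1}$ by a cylindrical ``corridor'' $T_k$ of radius $w_k=2^{-\gamma k}$ and length $\simeq 2^{-k}$, and join $R_1$ to $B_0$. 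Here $\alpha>1$ (so that the rooms are pairwise disjoint and $\Omega$ is bounded) and $\gamma>\alpha$ will be fixed at the end. Then $\Omega$ is a bounded domain whose boundary is smooth except at the origin, so all the work localizes there.

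For the $(W^{1,p},W^{1,q})$-extension, given $w\in W^{1,p}(\Omega)$ I would keep $w$ on $\Omega$, extend by a standard reflection operator away from the origin, and near the origin use a ``grounding'' extension: set the extension equal to $0$ on the part of $B(0,\delta)\setminus\overline{\Omega}$ that stays away from all rooms and corridors, and interpolate across the Whitney layers of width $\simeq r_k$ around each $R_k$ and of width $\simeq w_k$ around each $T_k$. The resulting $q$-energy splits into (i) a room term $\simeq\sum_k |w_{R_k}|^q r_k^{\,n-q}$, (ii) a corridor-wall term $\simeq\sum_k (|w_{R_k}|^q+|w_{R_{k+1}}|^q)\,w_k^{\,n-1-q}\,2^{-k}$, and (iii) terms bounded directly by $\|\nabla w\|_{L^p(\Omega)}$ on the rooms and corridors via Poincar\'e--Sobolev inequalities. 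Using $|w_{R_k}|^p r_k^{\,n}\lesssim\int_{R_k}|w|^p$ and H\"older's inequality in $k$ with exponents $p/q$ and $\tfrac{p}{p-q}$, term (i) is $\lesssim\|w\|_{L^p(\Omega)}^q$ provided $\sum_k r_k^{[p(n-q)-nq]/(p-q)}<\infty$, i.e. $p(n-q)>nq$, i.e. $p>q^{\star}$ --- this is the only place the hypothesis $q^{\star}<p$ enters. Term (ii) is finite for $\gamma$ large enough (here $q<n-1$, which follows from $q^{\star}<p\le n$, is used), and no \emph{upper} bound on $\gamma$ appears, since thinner corridors only decrease all the costs. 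Hence for $p>q^{\star}$ and $\gamma$ large, $\Omega$ is a $(W^{1,p},W^{1,q})$-extension domain.

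For the failure of $(L^{1,p},L^{1,q})$-extension I would test against $u$ with $u\equiv a_k:=2^{\tau k}$ on $R_k$, $u\equiv 0$ on $B_0$, interpolated linearly along the corridors; then $\|\nabla u\|_{L^p(\Omega)}^p\simeq\sum_k a_k^p\,2^{k(p-1)}w_k^{\,n-1}$, so $u\in L^{1,p}(\Omega)$ whenever $\tau<\tau_{\max}(\gamma):=\tfrac1p\big(\gamma(n-1)+1-p\big)$. Now let $v\in L^{1,q}(\R^n)$ be \emph{any} extension of $u$ and put $\psi_k:=\min\{a_{k+1},\max\{a_k,v\}\}-a_k$. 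Since $v=a_j$ on $R_j$, we have $\psi_k=0$ on $R_j$ for $j\le k$ and on $B_0$, and $\psi_k=a_{k+1}-a_k$ on $R_j$ for $j\ge k+1$, so $\psi_k/(a_{k+1}-a_k)$ is admissible for the condenser with plates $\bigcup_{j\ge k+1}R_j$ and $B_0\cup\bigcup_{j\le k}R_j$; as the sets $\{a_k<v<a_{k+1}\}$ are pairwise disjoint and $\nabla\psi_k=\nabla v\,\mathbf{1}_{\{a_k<v<a_{k+1}\}}$ a.e.,
\[
\|\nabla v\|_{L^q(\R^n)}^q\ \ge\ \sum_k\int_{\R^n}|\nabla\psi_k|^q\ \gtrsim\ \sum_k (a_{k+1}-a_k)^q\,\mathrm{Cap}_q(R_{k+1},R_k)\ \gtrsim\ \sum_k a_k^q\,r_{k+1}^{\,n-q},
\]
using $\mathrm{Cap}_q(R_{k+1},R_k)\ge\mathrm{Cap}_q(R_{k+1})\simeq r_{k+1}^{\,n-q}$ for $q<n$. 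With $a_k=2^{\tau k}$ the last series is (up to a constant) $\sum_k 2^{k(\tau q-\alpha(n-q))}$, which diverges as soon as $\tau\ge\alpha(n-q)/q$. Since $p>q^{\star}$ gives $\alpha(n-q)/q>\alpha n/p$ and $\tau_{\max}(\gamma)\to\infty$, I would finally fix any $\alpha>1$, then $\gamma$ so large that $\alpha(n-q)/q<\tau_{\max}(\gamma)$ and term (ii) converges, and then any $\tau\in[\alpha(n-q)/q,\tau_{\max}(\gamma))$: this $u$ lies in $L^{1,p}(\Omega)$ but has no extension in $L^{1,q}(\R^n)$, so $\Omega$ is not an $(L^{1,p},L^{1,q})$-extension domain.

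The main obstacle is the first step: turning the heuristic ``grounding'' extension into an honest bounded operator requires the full Whitney-layer construction near the origin, together with the correct Poincar\'e--Sobolev inequalities on the rooms and on the highly eccentric cylinders $T_k$, and one must verify that the local $L^p$-energies really control everything after summation in $k$ --- including the endpoint $p=n$, where the Sobolev conjugate degenerates and a logarithmic/BMO substitute is needed. By comparison the second step is robust; the one delicate point there is that the lower bound must hold for \emph{every} competitor $v$, which is why the condenser/truncation formulation is used rather than a (too weak) ``fat tube between consecutive rooms'' estimate.
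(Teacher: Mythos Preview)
The paper does not contain a proof of Proposition~\ref{th:StoH}: both assertions are quoted from \cite{KMZ:arxiv}, and the only hint the paper gives about the second part is the remark (preceding Theorem~\ref{th:n-1}) that the example in \cite{KMZ:arxiv} is ``of Nikod\'ym type'' and that ``the proof of the extension property heavily relies on the assumption that $p<n$.'' There is therefore nothing in the present paper to compare your construction against line by line.

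That said, a few comments on your sketch and on how it relates to what the paper does prove (namely Theorem~\ref{th:n-1}, whose construction in Section~5 is the paper's own Nikod\'ym-type example, for the different range $p>(n-1)q/(n-1-q)$). Your domain is a \emph{chain} of shrinking rooms joined by ever thinner corridors, accumulating at a single boundary point; the paper's domain for Theorem~\ref{th:n-1} is a cube with a sequence of ``mushrooms'' (thin stem $\mathcal C_k$, wide cap $\mathscr C_k$) attached to one face. Both are Nikod\'ym-type, and in both cases the $(W^{1,p},W^{1,q})$-extension is obtained by reflections multiplied by explicit cut-off functions, with the convergence of a series in $k$ encoding the threshold on $p$. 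Your ``room term'' heuristic $\sum_k|w_{R_k}|^q r_k^{\,n-q}$ is slightly too optimistic: what the cut-off actually produces is $\sum_k r_k^{-q}\int_{R_k}|w|^q$, since the transition layer sees the full trace of $w$ and not merely its average. After H\"older in $k$ this still gives exactly the condition $p(n-q)>nq$, i.e.\ $p>q^\star$, so your conclusion is right, but the stated formula would not survive a careful write-up.

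For the failure of $(L^{1,p},L^{1,q})$-extension you use a level-set/condenser argument that bounds $\|\nabla v\|_{L^q(\rn)}$ from below for \emph{every} competitor $v$. This is correct and rather elegant, but it is more than is needed and not what the paper does in the analogous place (Theorem~\ref{th:nothomo}): there one simply observes that any extension in $L^{1,q}(\rn)$ restricts to a function in $W^{1,q}$ on a large ball, hence $u\in W^{1,q}(\boz)$, and then exhibits a single $u\in L^{1,p}(\boz)\setminus L^q(\boz)$. In your setting the same shortcut works: once $\gamma$ is large enough, choosing $\tau>\alpha n/q$ (which is still below $\tau_{\max}(\gamma)$) gives $u\in L^{1,p}(\Omega)\setminus L^q(\Omega)$ directly, and the capacity machinery can be dropped. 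Your own caveat about the extension step is accurate: turning the ``grounding'' extension into an honest bounded operator is the only substantial piece of work, and it is exactly the kind of cut-off/reflection analysis carried out in Lemmas~\ref{le:cutoff}--\ref{le:extension} and Theorem~\ref{th:Sobexten}.
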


Towards the missing range $p>n$, let us analyze the positive result in more details. A detailed inspection on the proof of the first part of Proposition \ref{th:StoH} gives the following conclusion. 
\begin{thm}\label{th:poincare}
Let $1\leq q\leq p<\fz$. If a $p$-Poincar\'e domain $\boz$ is a $(W^{1, p}, W^{1, q})$-extension domain, then it is also an $(L^{1, p}, L^{1, q})$-extension domain.
\end{thm}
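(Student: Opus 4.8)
The plan is to derive the homogeneous extension property from the inhomogeneous one we are handed, by using the Poincar\'e inequality \eqref{eq:p-poin} to identify $L^{1,p}(\boz)$ with $W^{1,p}(\boz)$ up to additive constants. Fix an open ball $B_0$ with $\overline{B_0}\subset\boz$, and for $f\in L^1_{\loc}(\boz)$ write $f_{B_0}=\frac{1}{|B_0|}\int_{B_0}f$. The argument splits into two steps. \emph{Step (i)}: show that for every $u\in L^{1,p}(\boz)$ one has $u-u_{B_0}\in W^{1,p}(\boz)$ with
\[
\|u-u_{B_0}\|_{W^{1,p}(\boz)}\ls\|\nabla u\|_{L^p(\boz)},
\]
the implicit constant depending only on $\boz$, $B_0$, $p$. \emph{Step (ii)}: feed $v:=u-u_{B_0}$ into the given $(W^{1,p},W^{1,q})$-extension operator, add the constant $u_{B_0}$ back, and check that the result lies in $L^{1,q}(\rn)$ with the required bound.

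For Step (i) I would first record the ball form of \eqref{eq:p-poin}: since $\boz$ is bounded (hence $|\boz|<\fz$), for every $v\in W^{1,p}(\boz)$ one has, by \eqref{eq:p-poin} together with the H\"older bound $|v_\boz-v_{B_0}|\le|B_0|^{-1/p}\|v-v_\boz\|_{L^p(\boz)}$,
\[
\|v-v_{B_0}\|_{L^p(\boz)}\le\|v-v_\boz\|_{L^p(\boz)}+|v_\boz-v_{B_0}|\,|\boz|^{1/p}\ls\|\nabla v\|_{L^p(\boz)}.
\]
Now take $u\in L^{1,p}(\boz)$; since $u\in L^1_{\loc}(\boz)$ and $\nabla u\in L^p(\boz)$ is locally integrable, $u\in W^{1,1}_{\loc}(\boz)$, so the truncations $v_N:=\max\{-N,\min\{N,v\}\}$ of $v:=u-u_{B_0}$ lie in $W^{1,1}_{\loc}(\boz)$ with $\nabla v_N=\chi_{\{|v|<N\}}\nabla v$ a.e. Each $v_N$ is bounded, hence in $L^p(\boz)$, and $|\nabla v_N|\le|\nabla v|\in L^p(\boz)$, so $v_N\in W^{1,p}(\boz)$; the ball inequality applied to $v_N$ gives $\|v_N\|_{L^p(\boz)}\le C\|\nabla v\|_{L^p(\boz)}+|(v_N)_{B_0}|\,|\boz|^{1/p}$. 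Letting $N\to\fz$, dominated convergence on $B_0$ (using $|v_N|\le|v|\in L^1(B_0)$) yields $(v_N)_{B_0}\to v_{B_0}=0$, and Fatou's lemma on $\boz$ yields $\|v\|_{L^p(\boz)}\le\liminf_N\|v_N\|_{L^p(\boz)}\le C\|\nabla v\|_{L^p(\boz)}$. Hence $v=u-u_{B_0}\in W^{1,p}(\boz)$ with $\|v\|_{W^{1,p}(\boz)}\ls\|\nabla v\|_{L^p(\boz)}=\|\nabla u\|_{L^p(\boz)}$, which is Step (i).

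For Step (ii), let $E\colon W^{1,p}(\boz)\to W^{1,q}(\rn)$ be a bounded extension operator, which exists by hypothesis. Given $u\in L^{1,p}(\boz)$, set $c:=u_{B_0}$, $v:=u-c\in W^{1,p}(\boz)$ (Step (i)), and define $\wz E(u):=E(v)+c$. Then $\wz E(u)\big|_\boz=v+c=u$; moreover $\wz E(u)$ differs from $E(v)\in W^{1,q}(\rn)\subset L^1_{\loc}(\rn)$ by the constant $c$, so $\wz E(u)\in L^1_{\loc}(\rn)$, while
\[
\|\nabla\wz E(u)\|_{L^q(\rn)}=\|\nabla E(v)\|_{L^q(\rn)}\le\|E(v)\|_{W^{1,q}(\rn)}\ls\|v\|_{W^{1,p}(\boz)}\ls\|\nabla u\|_{L^p(\boz)}.
\]
Thus $\wz E(u)\in L^{1,q}(\rn)$ and its $L^{1,q}$-seminorm is controlled by $\|\nabla u\|_{L^p(\boz)}=\|u\|_{L^{1,p}(\boz)}$, so $\wz E$ is a bounded extension operator from $L^{1,p}(\boz)$ to $L^{1,q}(\rn)$; it is moreover linear whenever $E$ is, since $u\mapsto u_{B_0}$ is linear. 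Hence $\boz$ is an $(L^{1,p},L^{1,q})$-extension domain.

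The only point that requires genuine, if routine, care is Step (i): an element of $L^{1,p}(\boz)$ is a priori merely locally integrable and could in principle fail to be globally $L^p$ because of growth near $\partial\boz$, whereas \eqref{eq:p-poin} is assumed only for functions already known to lie in $W^{1,p}(\boz)$ and so cannot be invoked for $u$ directly. The truncation-and-Fatou device above is precisely what closes this gap; everything else is bookkeeping. (Alternatively, one may quote the known fact that on a bounded $p$-Poincar\'e domain $L^{1,p}(\boz)=W^{1,p}(\boz)$, and skip the details of Step (i).)
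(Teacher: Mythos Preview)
Your proof is correct and follows essentially the same route as the paper: subtract an average from $u$ to land in $W^{1,p}(\boz)$, apply the given $(W^{1,p},W^{1,q})$-extension, then add the constant back and read off the $L^{1,q}$ bound. The paper uses $u_\boz$ rather than $u_{B_0}$ and simply cites \cite{HKjam} for the identification $L^{1,p}(\boz)=W^{1,p}(\boz)$ on a bounded $p$-Poincar\'e domain, whereas you supply the truncation-and-Fatou argument yourself (and note the citation alternative); these are cosmetic differences, not a genuinely different strategy.
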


Let $\psi:[0, \fz)\to[0, \fz)$ be a left-continuous and nondecreasing function with $\psi(0)=0$ and $\psi(1)=1$. Define the corresponding outward cuspidal domain $\boz_\psi$ by setting 
\begin{equation}\label{eq:cusp}
\boz_\psi:=\lf\{(t, z)\in (0, 1)\times\rr^{n-1}: |z|<\psi(t)\r\}\cup B^n\lf((2, 0, \cdots, 0), \sqrt{2}\r).
\end{equation}
The function $\psi$ is called the model function for the outward cuspidal domain $\boz_\psi$. The following picture explains the geometric shape of an outward cuspidal domain. 
\begin{figure}[htbp]
\centering
\includegraphics[width=0.4\textwidth]
{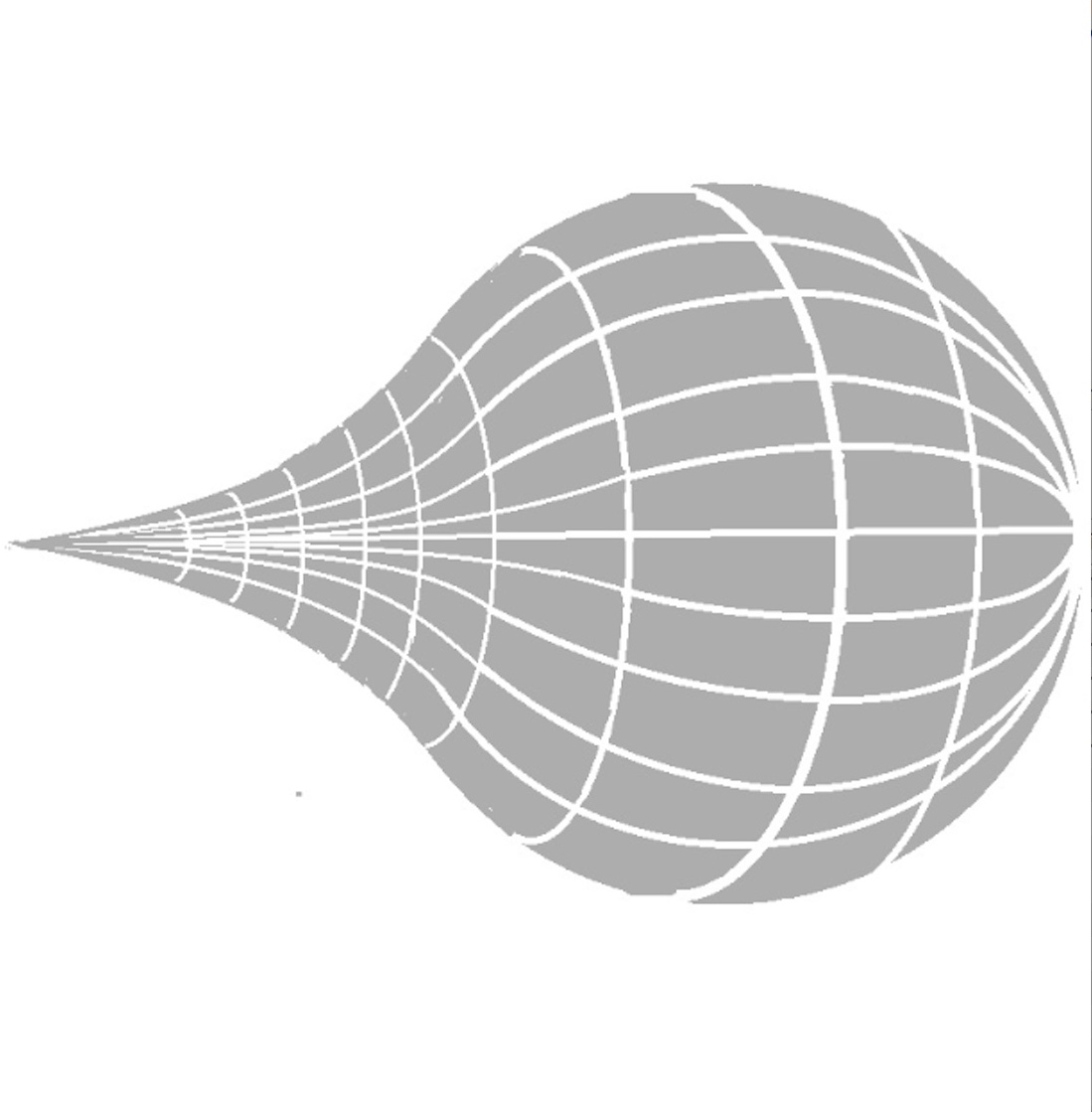}\label{fig:cusp}
\caption{The outward cuspidal domain}
\end{figure} 

In a series of papers \cite{Mazya1, Mazya2, Mazya3, Mazya5}, Maz'ya and Poborchi found sharp conditions on the cuspidal function $\psi$ to guarantee that the outward cuspidal domain $\boz_\psi$ is a $(W^{1, p}, W^{1, q})$-extension domain for $1\leq q<p<\fz$. In the paper \cite{KZ:cusp} it was proved that for an arbitrary model function $\psi$, the corresponding outward cuspidal domain $\boz_\psi$ is a $(W^{1, p}, W^{1, q})$-extension domain for every $1\leq q<n-1$ and $(n-1)q/(n-1-q)\leq p<\fz$. Furthermore, in the paper \cite{EKMZ:imrn}, it was proved that for an arbitrary outward cuspidal domain $\boz_\psi$, the Sobolev space $W^{1, p}(\boz_\psi)$ is isometrically equivalent to the Haj\l{}asz-Sobolev space $M^{1, p}(\boz_\psi)$ for every $1<p<\fz$. Also see \cite{Zhu1} for a more general result. Via this result, we can conclude that $\boz_\psi$ is a $p$-Poincar\'e domain for every $1<p<\fz$. Hence, for outward cuspidal domains, the two different Sobolev extension properties are equivalent. 
\begin{thm}\label{th:cusp}
Let $\boz_\psi\subset\rn$ be an outward cuspidal domain. Then it is a $p$-Poincar\'e domain for $1<p<\fz$. It is a $(W^{1, p}, W^{1, q})$-extension domain for some $1\leq q<p<\fz$, if and only if it is also an $(L^{1,p}, L^{1, q})$-extension domain.
\end{thm}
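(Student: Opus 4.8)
I would prove the statement in three steps, matching its two assertions.

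\emph{Step 1 (the $p$-Poincar\'e property, $1<p<\fz$).} The plan is to feed the identification of \cite{EKMZ:imrn} into the pointwise description of Haj\l{}asz--Sobolev functions. By \cite{EKMZ:imrn}, for $1<p<\fz$ the spaces $W^{1,p}(\boz_\psi)$ and $M^{1,p}(\boz_\psi)$ coincide with comparable norms; what I will use is the quantitative consequence that every $u\in W^{1,p}(\boz_\psi)$ admits a Haj\l{}asz gradient $g\ge 0$, i.e.\ a measurable function with
\[
|u(x)-u(y)|\le |x-y|\bigl(g(x)+g(y)\bigr)\qquad\text{for a.e. }x,y\in\boz_\psi,
\]
that obeys the \emph{homogeneous} bound $\|g\|_{L^p(\boz_\psi)}\ls\|\nabla u\|_{L^p(\boz_\psi)}$. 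Granting this, I average the pointwise inequality in $y$ over the bounded set $\boz_\psi$ to get $|u(x)-u_{\boz_\psi}|\le\diam(\boz_\psi)\bigl(g(x)+\frac{1}{|\boz_\psi|}\int_{\boz_\psi}g\bigr)$, and then raising to the $p$-th power, integrating over $\boz_\psi$, and applying Jensen's inequality to the average term yields $\|u-u_{\boz_\psi}\|_{L^p(\boz_\psi)}\ls\diam(\boz_\psi)\|g\|_{L^p(\boz_\psi)}\ls\|\nabla u\|_{L^p(\boz_\psi)}$, which is \eqref{eq:p-poin}. This step is where I expect the real difficulty to lie: the identification in \cite{EKMZ:imrn} a priori bounds the Haj\l{}asz gradient only by the full norm $\|u\|_{W^{1,p}(\boz_\psi)}$, and one must verify that the gradient it constructs --- essentially a maximal-type average of $|\nabla u|$ --- is controlled by $\|\nabla u\|_{L^p(\boz_\psi)}$ alone. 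This is exactly where $1<p<\fz$ enters (via $L^p$-boundedness of that averaging operator), and it is the reason $p=1$ must be excluded.

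\emph{Step 2 ($(W^{1,p},W^{1,q})$-extension $\Rightarrow$ $(L^{1,p},L^{1,q})$-extension, $1\le q<p<\fz$).} This is immediate: since $q<p$ forces $p>1$, Step 1 gives that $\boz_\psi$ is a $p$-Poincar\'e domain, and Theorem \ref{th:poincare} then turns the $(W^{1,p},W^{1,q})$-extension property of $\boz_\psi$ directly into the $(L^{1,p},L^{1,q})$-extension property.

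\emph{Step 3 ($(L^{1,p},L^{1,q})$-extension $\Rightarrow$ $(W^{1,p},W^{1,q})$-extension, $1\le q<p<\fz$).} Here I would use only that $\boz_\psi$ is bounded; this is the standard way of promoting a homogeneous extension to an inhomogeneous one, in the spirit of the implication $(L^{1,p},L^{1,p})\Rightarrow(W^{1,p},W^{1,p})$ from \cite{KMZ:arxiv}. Let $E$ be a bounded $(L^{1,p},L^{1,q})$-extension operator, fix a ball $B\supset\boz_\psi$, and fix cutoffs $\eta,\zeta\in C_c^\infty(4B)$ with $\eta\equiv\zeta\equiv1$ on $2B$. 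For $u\in W^{1,p}(\boz_\psi)$, set $w:=E(u)\in L^1_{\loc}(\rn)$, so $w|_{\boz_\psi}=u$ and $\|\nabla w\|_{L^q(\rn)}\ls\|\nabla u\|_{L^p(\boz_\psi)}$, put $c:=\frac{1}{|B|}\int_B w$, and define
\[
\wz E(u):=\eta\,(w-c)+c\,\zeta .
\]
On $\boz_\psi\subset 2B$ this equals $(w-c)+c=u$, so $\wz E$ is a linear extension operator. For the bound $\|\wz E(u)\|_{W^{1,q}(\rn)}\ls\|u\|_{W^{1,p}(\boz_\psi)}$ it then suffices to control $\|w-c\|_{L^q(4B)}$ and $|c|$ on the \emph{fixed} ball $4B$ (the term $\eta\nabla w$ being handled globally), and both follow from the $(q,q)$-Poincar\'e inequality on the ball $4B$ together with the comparison of the mean values $w_{4B}$, $c$ and $w_{\boz_\psi}=u_{\boz_\psi}$ through $\boz_\psi$, using $|u_{\boz_\psi}|\le|\boz_\psi|^{-1/p}\|u\|_{L^p(\boz_\psi)}$; this produces $\|\wz E(u)\|_{W^{1,q}(\rn)}\ls\|\nabla u\|_{L^p(\boz_\psi)}+\|u\|_{L^p(\boz_\psi)}$. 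Since Steps 2 and 3 are routine, the heart of the theorem is the homogeneity issue flagged in Step 1.
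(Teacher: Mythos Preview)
Your approach is essentially the paper's. Two small differences are worth noting: for Step~1 the paper invokes \cite[Lemma~4.1]{Zhu1} rather than \cite{EKMZ:imrn}, and that lemma already states the pointwise inequality together with the homogeneous bound $\|g_u\|_{L^p(\boz_\psi)}\le C\|\nabla u\|_{L^p(\boz_\psi)}$, so the difficulty you correctly flag is absorbed into that citation; and the paper's written proof actually omits your Step~3 entirely, establishing only the forward implication via Theorem~\ref{th:poincare} and leaving the converse to \cite{KMZ:arxiv}, so your cutoff argument supplies more than the paper does.
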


Our final result addresses the case $p>n$ which is not covered by the second part of the Proposition \ref{th:StoH}. Notice that the condition $q^\star <p\leq n$ in Proposition \ref{th:StoH} requires that $1\leq q<\frac{n}{2}$ and that no counterexamples can exist by the first part of the Proposition when $q \geq n$. Theorem \ref{th:n-1} below will provide us with negative examples when $1\leq q <n-1$, but the case where $n-1\leq q<n$ remains open. By Theorem \ref{th:cusp}, outward cusps do not give examples for the missing case $n-1\leq q<n$. However, the construction for Theorem \ref{th:n-1} is based on a somewhat related non-smoothness. Indeed, one relies on outward ``mushrooms'', based on cylindrical stems. The idea behind constructions of this type goes back to Nikod\'ym. The construction for the example referred to in Theorem \ref{th:poincare} is also of Nikod\'ym-type. However, the proof of the extension property heavily relies on the assumption that $p<n$. We prove the extension property of our example via ideas from \cite{KUZ:JFA} instead of using ideas from \cite{KMZ:arxiv}. Analysis of our argument shows that the restriction $p> \frac{q(n-1)}{(n-1-q)}$ cannot be avoided, see Section \ref{difdomain}. This suggest that perhaps no counterexamples for the coincidence exist when $q\geq n-1$.

\begin{thm}\label{th:n-1}
Let $n\geq 3$, $1\leq q<n-1$ and $(n-1)q/(n-1-q)<p<\fz$. Then there exists a bounded domain $\boz\subset\rn$ which is a $(W^{1, p}, W^{1, q})$-extension domain but not an $(L^{1, p}, L^{1, q})$-extension domain.
\end{thm}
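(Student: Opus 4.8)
The plan is to realize $\Omega$ as a Nikod\'ym-type domain. Fix a ball $B\subset\rn$ and a point $x_0\in\partial B$, and attach to $B$ a sequence of pairwise disjoint ``outward mushrooms'' $M_j=S_j\cup H_j$, accumulating only at $x_0$: each $S_j$ is a cylindrical stem of radius $r_j$ and length $\ell_j$ issuing from $\partial B$, and each $H_j$ is a ball-shaped head of radius $a_j$ capping $S_j$. Put $\Omega=B\cup\bigcup_j M_j$, a bounded domain. The three null sequences $(r_j),(\ell_j),(a_j)$ will be geometrically decaying and tuned so that the two requirements below hold simultaneously; this is possible precisely when $p>(n-1)q/(n-1-q)$, the dimension $n-1$ entering through the codimension-one cross-sections of the stems.

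For the non-extension part I would argue by contradiction through a homogeneous Sobolev--Poincar\'e inequality. If $\Omega$ were a bounded $(L^{1,p},L^{1,q})$-extension domain with operator $E$, then for every $u\in L^{1,p}(\Omega)$ one would have $\|\nabla Eu\|_{L^q(\rn)}\le C\|\nabla u\|_{L^p(\Omega)}$, and since $q<n-1<n$ the Gagliardo--Nirenberg--Sobolev inequality on $\rn$ produces a constant $c$ with $\|Eu-c\|_{L^{q^\star}(\rn)}\lesssim\|\nabla Eu\|_{L^q(\rn)}$ (here $q^\star=nq/(n-q)$); restricting to $\Omega$ yields
\[
\inf_{c\in\rr}\|u-c\|_{L^{q^\star}(\Omega)}\le C'\,\|\nabla u\|_{L^p(\Omega)}\qquad\text{for every }u\in L^{1,p}(\Omega).
\]
I would then violate this with the functions $u_j$ equal to $1$ on $H_j$, equal to $0$ off $M_j$, and interpolating linearly along $S_j$: here $\|\nabla u_j\|_{L^p(\Omega)}^p\simeq r_j^{n-1}\ell_j^{1-p}$, while $u_j$ is, up to a small additive constant, the indicator of a set of measure $\simeq a_j^n$ inside a domain of measure $\simeq 1$, so $\inf_c\|u_j-c\|_{L^{q^\star}(\Omega)}\simeq a_j^{n/q^\star}$. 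Choosing $(r_j)$ to decay fast enough relative to $(a_j),(\ell_j)$ forces $a_j^{n/q^\star}/(r_j^{n-1}\ell_j^{1-p})^{1/p}\to\infty$, contradicting the displayed inequality and hence ruling out $E$.

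For the extension part I would construct a bounded operator $W^{1,p}(\Omega)\to W^{1,q}(\rn)$ following the scheme of \cite{KUZ:JFA} (the method of \cite{KMZ:arxiv} is unavailable, as it relies on $p<n$). A reflection handles $\partial\Omega$ near $B$; the crux is a \emph{uniform-in-$j$} extension of $u|_{M_j}$ into the Whitney region $G_j$ of $\rn\setminus\overline\Omega$ adjoining $M_j$. I would reconstruct $u|_{M_j}$ in $G_j$ from a chain of averages of $u$ over the $\simeq\ell_j/r_j$ unit sub-cylinders of $S_j$, together with a rescaled Poincar\'e--Sobolev extension of $u|_{H_j}$, glued by a partition of unity subordinate to a Whitney cover; the gluing cost is governed by the defect $|u_{H_j}-u_{\partial B}|\lesssim r_j^{-(n-1)/p}\ell_j^{1-1/p}\|\nabla u\|_{L^p(S_j)}+a_j^{1-n/p}\|\nabla u\|_{L^p(H_j)}$. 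This produces a local estimate $\|\nabla Eu\|_{L^q(G_j)}^q\lesssim w_j\,\|\nabla u\|_{L^p(M_j)}^q+(\text{lower-order terms})$ with explicit geometric weights $w_j$ carrying the factor $r_j^{-q(n-1)/p}$; summing over $j$ and applying H\"older's inequality with exponents $p/q$ and $p/(p-q)$ (legitimate since $q<p$) reduces matters to the convergence of $\sum_j w_j^{p/(p-q)}$, which holds for an admissible choice of parameters exactly when $p>(n-1)q/(n-1-q)$, and fails otherwise (cf. Section \ref{difdomain}).

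The main obstacle is the extension direction: establishing the sharp per-mushroom estimate, keeping the stem ``defect'' (governed by the codimension-one Sobolev exponent $(n-1)q/(n-1-q)$) separate from the head oscillation, and performing the summation over infinitely many mushrooms so that the full $W^{1,q}(\rn)$ norm is dominated by $\|u\|_{W^{1,p}(\Omega)}$. This is exactly where the hypothesis $p>(n-1)q/(n-1-q)$ is forced; the same condition also dictates how rapidly $(r_j)$ must decay for the non-extension part, so it underlies both halves of the construction.
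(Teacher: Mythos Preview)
Your proposal is correct and shares the paper's overall architecture: a Nikod\'ym-type mushroom domain, with the $(W^{1,p},W^{1,q})$-extension constructed via the reflection/cut-off scheme of \cite{KUZ:JFA}. The non-extension argument, however, is genuinely different. You deduce from a hypothetical $(L^{1,p},L^{1,q})$-extension a global $(q^\star,p)$-Sobolev--Poincar\'e inequality on $\Omega$ and violate it with a \emph{sequence} of single-mushroom test functions $u_j$. The paper instead uses the more direct observation that if $\Omega\subset B$ is an $(L^{1,p},L^{1,q})$-extension domain then $L^{1,p}(\Omega)\subset W^{1,q}(\Omega)$ as sets (because $L^{1,q}(B)=W^{1,q}(B)$ on a ball), and then exhibits a \emph{single} function $u\in L^{1,p}(\Omega)\setminus L^q(\Omega)$: take $u\equiv 0$ on the base body, $u\equiv c_k$ on the $k$-th head with $c_k\to\infty$ calibrated to the stem radii, and interpolate linearly along the stems. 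The paper's route is shorter and bypasses the Gagliardo--Nirenberg step; yours is more robust in that it does not require the head volumes to be large enough to carry an $L^q$-divergent function. On the extension side, the paper's implementation is more concrete than your Whitney/chain-of-averages outline: it writes down explicit cut-off functions on the annular shells around each stem (Lemma~\ref{le:cutoff}) together with explicit reflections, and verifies the $W^{1,q}$ bound by direct integration; the resulting summability condition $\sum_k r_k^{\,n-1-pq/(p-q)}<\infty$ is exactly the one your H\"older-in-$j$ summation would produce, so the difference here is presentation rather than substance.
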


\section{Preliminaries}
We refer to generic positive constants by $C$. These constants may change even in a single string of estimates. The dependence of the constant on parameters $\alpha, \beta,\cdots$ is expressed by the notation $C = C(\alpha, \beta, \cdots)$ if needed.
First, let us give the definition of Sobolev spaces and homogeneous Sobolev spaces.
\begin{defn}\label{de:sobolev}
For $u\in L^1_{\rm loc}(\boz)$, we say $u$ belongs to the homogeneous Sobolev space $L^{1, p}(\boz)$ for $1\leq p\leq\fz$ if $u$ is weakly differentiable and its weak (distributional) derivative $\nabla u$ belongs to $L^p(\boz; \rn)$. The homogeneous Sobolev space $L^{1, p}(\boz)$ is equipped with the semi-norm
\[\|u\|_{L^{1, p}(\boz)}:=\begin{cases}
\lf(\int_\boz\lf|\nabla u(x)\r|^pdx\r)^{\frac{1}{p}}, &\ {\rm for}\ 1\leq p<\fz,\\
{\rm esssup}\lf|\nabla u\r|, &\ {\rm for}\ p=\fz.
\end{cases}\] 
Furthermore, if $u\in L^{1,p}(\Omega)$ is also $L^p$-integrable, then we say $u$ belongs to the Sobolev space $W^{1 ,p}(\boz)$. The Sobolev space $W^{1, p}(\boz)$ is equipped with the norm
\[\|u\|_{W^{1, p}(\boz)}:=\begin{cases}
\lf(\int_\boz\lf|u(x)\r|^p+\lf|\nabla u(x)\r|^pdx\r)^{\frac{1}{p}}, &\ {\rm for}\ 1\leq p<\fz,\\
{\rm esssup}\lf(\lf|u\r|+\lf|\nabla u\r|\r), &\ {\rm for}\ p=\fz.
\end{cases}\] 
\end{defn}

By the results due to Calder\'on \cite{calderon} and Stein \cite{stein}, Lipschitz domains are $(W^{k, p}, W^{k, p})$-extension domains for arbitrary $k\in\mathbb N$ and $1\leq p\leq\fz$. Later, Jones generalized this result to $(\epsilon, \delta)$-domains. Every Lipschitz domain is an $(\epsilon, \delta)$-domain for some $\epsilon, \delta>0$. However, the class of $(\epsilon, \delta)$-domains is strictly larger than the class of Lipschitz domains. As we will use this result later, for the convenience of readers, we present the definition of $(\epsilon, \delta)$-domains here.
\begin{defn}\label{de:ED}
Let $0<\epsilon<1$ and $\delta>0$. We say that a domain $\boz\subset\rn$ is an $(\epsilon, \delta)$-domain, if for every $x, y\in\boz$ with $|x-y|<\delta$, there is a rectifiable curve $\gamma\subset\boz$ joining $x, y$ and satisfies the following two inequalities that 
\begin{equation}\label{eq:uni1}
{\rm length}(\gamma)\leq\frac{1}{\epsilon}|x-y|
\end{equation}
and
\begin{equation}\label{eq:uni2}
d(z, \partial\boz)\geq\frac{\epsilon|x-z||y-z|}{|x-y|}\ {\rm for\ all}\ z\ {\rm on}\ \gamma.
\end{equation}
\end{defn}

By a classical mollification argument, for arbitrary domain $\boz$, we always have that $C^\fz(\boz)\cap W^{1, p}(\boz)$ is always dense in $W^{1, p}(\boz)$ for every $1\leq p<\fz$. See \cite[Theorem 4.2]{Evans:book}. However, we cannot have that $C^\fz(\overline\boz)\cap W^{1, p}(\boz)$ is always dense in $W^{1, p}(\boz)$, unless the domain $\boz$ is sufficiently nice. For the following definition of segment condition, see e.g. the book \cite{AandF:book}.
\begin{defn}\label{de:segment}
Let $\boz\subset\rn$ be a domain. We say it satisfies the segment condition if for every $x\in\partial\boz$, there exists a neighborhood $U_x$ of $x$ and a nonzero vector $y_x$ such that if $z\in\overline\boz\cap U_x$, then $z+ty_x\in\boz$ for $0<t<1$.
\end{defn}
The following lemma tells us that the Sobolev functions defined on a domain with segment condition can be approximated by functions smooth up to the boundary.
\begin{lem}\label{le:smooth}
Let $\boz\subset\rn$ be a domain which satisfies the segment condition. Then for every $1\leq p<\fz$, the subspace $C^\fz(\overline\boz)\cap W^{1, p}(\boz)$ is dense in $W^{1, p}(\boz)$.
\end{lem}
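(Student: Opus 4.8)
The plan is to run the classical argument (see \cite[\S 3]{AandF:book}): localize by a partition of unity, dispose of the pieces supported in the interior by ordinary mollification, and treat each boundary piece by first translating it slightly \emph{into} $\boz$ along the vector furnished by the segment condition and only then mollifying, so that the translate is the restriction to $\boz$ of a Sobolev function defined on a full neighbourhood of the relevant part of $\overline\boz$. First I would fix $u\in\wp$ and $\ez>0$. The open sets $\boz$ and $\{U_x\}_{x\in\partial\boz}$ cover $\overline\boz$, so adjoining $\rn\setminus\overline\boz$ and passing to a locally finite refinement yields a smooth partition of unity $\{\psi_i\}$ with $0\le\psi_i\le 1$ and $\sum_i\psi_i\equiv1$ near $\overline\boz$, each $\operatorname{supp}\psi_i$ either compactly contained in $\boz$ or contained in some $U_{x_i}$ with $x_i\in\partial\boz$ (the indices with $\operatorname{supp}\psi_i\cap\overline\boz=\emptyset$ being discarded). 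Then $u=\sum_i\psi_i u$ on $\boz$ with $\psi_i u\in\wp$, so it suffices to approximate each $\psi_i u$ in $\wp$, within $\ez/2^i$, by a function in $C^\fz(\overline\boz)$ supported near $\operatorname{supp}\psi_i$, and then sum (a locally finite sum, hence smooth near $\overline\boz$; when $\boz$ is bounded the sum is finite and this causes no trouble). For a piece $v=\psi_i u$ with $\operatorname{supp}v\Subset\boz$ I would simply take $\rho_\dz*v\in C^\fz_c(\boz)$ and use $\rho_\dz*v\to v$ in $\wp$ as $\dz\to0$.

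The substantial case is a boundary piece $v=\psi_i u$. Put $K:=\overline\boz\cap\operatorname{supp}\psi_i$, a compact subset of $U_{x_i}$, and let $y:=y_{x_i}$ be the segment-condition vector, so that $x+ty\in\boz$ for all $x\in\overline\boz\cap U_{x_i}$ and $0<t<1$. Fixing such a $t$, small enough that $K-ty\subset U_{x_i}$, the translate $v_t:=v(\cdot+ty)$ lies in $W^{1,p}(\boz-ty)$, the compact set $K$ lies inside the open set $\boz-ty$, and likewise $\overline\boz\cap(K-ty)\subset\boz-ty$. Hence, for $\dz$ small relative to $t$, the mollification $\rho_\dz*v_t$ is defined and smooth on the $\dz$-interior of $\boz-ty$, which contains $\overline\boz\cap(K-ty)$; extending it by zero gives a compactly supported function $w_{t,\dz}$ which I claim is smooth on a neighbourhood of $\overline\boz$ — this is exactly where the segment condition enters, since $\overline\boz$ meets the support of $\rho_\dz*v_t$ (contained in a $\dz$-neighbourhood of $K-ty$) only at points that the segment condition forces to sit well inside $\boz-ty$. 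So $w_{t,\dz}\in C^\fz(\overline\boz)\cap\wp$ with support in $U_{x_i}$ near $\operatorname{supp}\psi_i$, and on $\boz$ I would bound $\|w_{t,\dz}-v\|_{\wp}$ by a mollification error $\|\rho_\dz*v_t-v_t\|$ plus a translation error $\|v_t-v\|$ (measured in $W^{1,p}$ over the part of $\boz$ near $K$): the first tends to $0$ as $\dz\to0$ by the standard mollifier estimate on the exhausting open sets, the second tends to $0$ as $t\to0$ by the $L^p$-continuity of translations, applied to $v$ and to $\nabla v$ extended by zero (together with $|(K-ty)\triangle K|\to0$, which controls the thin region where $v$ vanishes but $v_t$ need not). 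Sending $t\to0$ first and then $\dz\to0$ produces the required $C^\fz(\overline\boz)$-approximant of $v$, and summing over $i$ finishes the proof.

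The hard part will be the verification that $w_{t,\dz}$ is genuinely smooth on a neighbourhood of $\overline\boz$ and the accompanying estimate: the margin by which $K-ty$ sits strictly inside $\boz-ty$ degenerates as $t\to0$, so $\dz$ cannot be chosen uniformly and must be fixed only after $t$, and one has to check carefully that the $\dz$-neighbourhood of $K-ty$ meets $\overline\boz$ only inside the $\dz$-interior of $\boz-ty$ — this is the precise point at which the geometry of the segment condition is indispensable. Everything else — the localization, the interior mollification, and, for unbounded $\boz$, choosing the partition of unity so that the pieces are summable (bounded overlap, controlled $|\nabla\psi_i|$) — is routine, and for the bounded domains relevant to this paper it is essentially automatic. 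The complete details are in \cite[Theorem~3.22]{AandF:book}.
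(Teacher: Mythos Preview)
Your argument is the standard one and is correct; the paper itself does not prove this lemma at all but simply states it as a known fact, implicitly relying on \cite{AandF:book} (which it cites for the segment condition just before the statement). So there is nothing to compare: your sketch reproduces the classical Adams--Fournier proof, which is precisely what the paper is invoking.
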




\section{Proof of Theorem \ref{th:poincare}}
In this section, we prove Theorem \ref{th:poincare}. 
\begin{proof}[Proof of Theorem \ref{th:poincare}]
 Let $\boz\subset\rn$ be a bounded $p$-Poincar\'e domain and a $(W^{1, p}, W^{1, q})$-extension domain. By \cite{HKjam}, we know that $W^{1, p}(\Omega)$ coincides with $L^{1, p}(\Omega)$ as a set of functions. So, for $u\in L^{1,p}(\Omega),$ we have $(u-u_{\Omega})\in W^{1,p}(\Omega).$ Let $B$ be a ball with $\Omega \subset B$. We define a function on $B$ by 
$$T(u):=E(u-u_\boz)+u_\boz.$$ 
Then we have $T(u)\in W^{1, q}(B)$ with $T(u)\big|_B\equiv u$. By the fact $\boz$ is a $(W^{1, p}, W^{1, q})$-extension domain and by the $(p,p)$-Poincar\'e inequality, we have 
\begin{eqnarray}
\|\nabla T(u)\|_{L^q(B)}=\|\nabla E(u-u_\boz)\|_{L^q(B)}\leq \|u-u_{\boz}\|_{W^{1, p}(\boz)}\leq C\|\nabla u\|_{L^p(\boz)}.
\end{eqnarray}
This shows that $\boz$ is a $(L^{1, p}, L^{1, q})$-extension domain, since $B$ is such a domain.
\end{proof}
 \section{Proof of Theorem \ref{th:cusp}}
 \begin{proof}[Proof of Theorem \ref{th:cusp}]
 
    Let $\boz_\psi\subset\rn$ be an outward cuspidal domain. At first we show that $\boz_\psi$ is a $p$- Poincar\'e domain. Fix $u\in C^{\infty}(\boz_\psi)\cap W^{1,p}(\boz_\psi)$. From \cite[Lemma 4.1]{Zhu1}, there exists a non-negative function $g_u\in L^{p}(\boz_\psi)$ such that for almost every $x ,y \in \boz_\psi$, we have
    \begin{equation}\label{eq4.1}
        |u(x)-u(y)|\leq C |x-y|(g_{u}(x)+ g_{u}(y))
    \end{equation}
    and 
    \begin{equation}\label{eq4.2}
        \int_{\boz_{\psi}}g_u(z)^p dz \leq C \int_{\boz_\psi}|\nabla u(z)|^pdz
    \end{equation}
    with a constant independent of $u$.\\
    By using \ref{eq4.1} and \ref{eq4.2}, we get that
    \begin{eqnarray} \label{peq1}
        \int_{\boz_\psi}|u(x)-u_{\boz_\psi}|^pdx &=& \int_{\boz_\psi}|u(x) \barint_{\boz_\psi} dy -\barint_{\boz_\psi}u(y)dy|^p dx \nonumber\\
        &= & \int_{\boz_\psi}| \barint_{\boz_\psi}u(x) dy -\barint_{\boz_\psi}u(y)dy|^p dx \nonumber\\
        &\leq & \int_{\boz_\psi}\barint_{\boz_\psi}|u(x)-u(y)|^pdydx\nonumber\\
        &\leq& \int_{\boz_\psi}\barint_{\boz_\psi} |x-y|^p(g_u(x)+g_u(y))^pdydx\nonumber\\
        &\leq & C \diam(\boz_\psi)^p \int_{\boz_\psi}|\nabla u(z)|^pdz
    \end{eqnarray}
    Hence, \ref{peq1} shows that $\boz_\psi$ is a $p$-Poincar\'e domain for every $1\leq p \leq \infty$. By using Theorem \ref{th:poincare}, we get that if $\boz_\psi$ is a $(W^{1,p},W^{1,q})$-extension domain, then it is also an $(L^{1,p},L^{1,q})$-extension domain.
 \end{proof}
\section{Proof of Theorem \ref{th:n-1}}

\subsection{The construction of the domain}\label{construction}
Let $1\leq q<n-1$ and $(n-1)q/(n-1-q)<p<\infty$ be fixed.
We use the notation $\mathscr Q(x, r)$ to denote an $(n-1)$-dimensional closed cube in the $(n-1)$-dimensional Euclidean hyperplane $\rr^{n-1}$ with the center $x\in\rr^{n-1}$ and the side-length $2r$. Let $\mathscr Q_0:=[0, 1]^{n-1}$ be the $(n-1)$-dimensional closed unit cube in the $(n-1)$-dimensional Euclidean hyperplane $\mathbb R^{n-1}$.  We divide $\mathscr Q_0$ into $2^{n-1}$ equivalent $(n-1)$-dimensional closed cubes whose interiors are pairwise disjoint. Pick one of them and denote it by $\mathscr Q_1$.  Denote by $z_1$ to be the center of  the cube $\mathscr Q_1$. So $\mathscr Q_1=\mathscr Q\lf(z_1, \frac{1}{4}\r)$. Define
$$\mathscr B_1:=B^{n-1}\lf(z_1, \frac{1}{2^2}\r)\ {\rm and}\  \mathcal B_1:={B^{n-1}}\lf(z_1, \lf(\frac{1}{4}\r)^{\frac{1}{n-1}+\frac{p}{q}}\r).$$
Define $\mathscr C_1:=\mathscr B_1\times \lf(\frac{3}{2}, 2\r)$ to be an open cylinder and $\mathcal C_1:=\mathcal B_1\times\lf[1, \frac{3}{2}\r]$ to be a cylinder which connects the unit cube $\mathcal Q_0$ and the cylinder $\mathscr C_1$.
 
Choose one of the cubes different from $\mathscr Q_1$ from the first step and denote it by $\mathscr Q'_1$. Divide $\mathscr Q'_1$ into $2^{n-1}$ equivalent $(n-1)$-dimensional closed cubes such that their interiors are pairwise disjoint. Pick one of them up and denote it by $\mathscr Q_2$. Then its edge length is $\frac{1}{4}$. Denote $z_2$ to be the center of the cube $\mathscr Q_2$. So $\mathscr Q_2=\mathscr Q\lf(z_2, \frac{1}{8}\r)$. Define 
\[\mathscr B_2:=B^{n-1}\lf(z_2, \frac{1}{2^3}\r)\ {\rm and}\ \mathcal B_2:={B^{n-1}}\lf(z_2, \lf(\frac{1}{4^2}\r)^{\frac{1}{n-1}+\frac{p}{q}}\r).\]
Define $\mathscr C_2:=\mathscr B_2\times\lf(\frac{3}{2}, 2\r)$ to be an open cylinder and $\mathcal C_2:=\mathcal B_2\times\lf[1, \frac{3}{2}\r]$ to be a cylinder connecting the cube $\mathcal Q_0$ and the cylinder $\mathscr C_2$.

We continue the construction by induction. For $k\in\mathbb N$, assume we have already constructed $(n-1)$-dimensional closed cubes $\mathscr Q_k$, $(n-1)$-dimensional balls $\mathscr B_k$ and $\mathcal B_k$ and the cylinders $\mathscr C_k$ and $\mathcal C_k$. Here $\mathscr Q_k$ is one of $2^{n-1}$ equivalent subcubes of the dyadic cube from $\mathscr Q_{k-1}$. Fix a remaining cube in $\mathscr Q_{k-1}$ and denote it by $\mathscr Q'_{k}$. Divide $\mathscr Q'_k$ into $2^{n-1}$ equivalent $(n-1)$-dimensional closed cubes such that their interiors are pairwise disjoint. Fix one of them and denote it by $\mathscr Q_{k+1}$. Its edge length is $\frac{1}{2^{k+1}}$. Set $z_{k+1}$ to be the center of $\mathscr Q_{k+1}$. Define
\[ \mathscr B_{k+1}:= B^{n-1}\lf(z_{k+1}, \frac{1}{2^{k+2}}\r)\ {\rm and}\  \mathcal B_{k+1}:={B^{n-1}}\lf(z_{k+1}, \lf(\frac{1}{4^{k+1}}\r)^{\frac{1}{n-1}+\frac{p}{q}}\r).\]
Define $\mathscr C_{k+1}:=\mathscr B_{k+1}\times(2, 3)$ and define $\mathcal C_{k+1}:=\mathcal B_{k+1}\times [1, 2]$ to be a cylinder which connects $\mathcal Q_0$ and $\mathscr C_{k+1}$. To simplify the notation, for every $1\leq k<\fz$, we use $\tilde r_k$ and $r_k$ to denote the radius of $\mathscr B_k$ and $\mathcal B_k$ respectively. That is
\begin{equation}\label{eq:radius}
\tilde r_k=\frac{1}{2^{k+1}}\ {\rm and}\ r_k=\lf(\frac{1}{4^k}\r)^{\frac{1}{n-1}+\frac{p}{q}}.
\end{equation}

In conclusion, for every $k\in\mathbb N$, we have constructed two cylinders $\mathscr C_k$ and $\mathcal C_k$ such that the cylinder $\mathcal C_k$ connects the unit cube $\mathcal Q_0$ and the cylinder $\mathscr C_k$. Then our domain $\boz_{p ,q}$ is defined by setting
\begin{equation}\label{eq:boz}
\boz_{p, q}:=\mathcal Q_0\cup\lf(\bigcup_{k=1}^\fz\mathscr C_k\r)\cup\lf(\bigcup_{k=1}^\fz\mathcal C_k\r).
\end{equation}
Given $m\in\mathbb N$, we define
\begin{equation}\label{eq:boz1}
\boz^m_{p, q}:=\mathcal Q_0\cup\lf(\bigcup_{k=1}^m\mathscr C_k\r)\cup\lf(\bigcup_{k=1}^m\mathcal C_k\r).
\end{equation}
\begin{figure}[htbp]
\centering
\includegraphics[width=0.6\textwidth]
{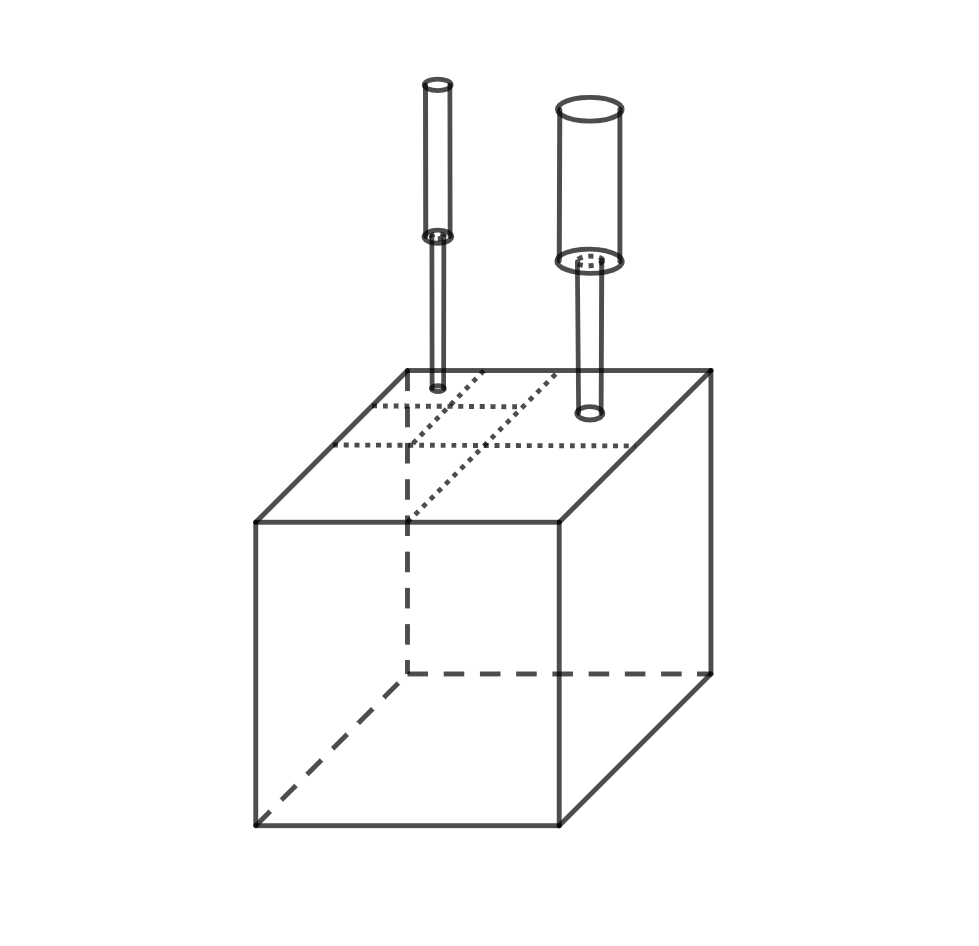}\label{fig:boz}
\caption{The domain $\boz^2_{p,q}$.}
\end{figure} 
Figure $1$ illustrates the construction of these domains. It is easy to see that the domain $\boz_{p, q}\subset\rn $ satisfies the segment condition defined in Definition \ref{de:segment}. Hence, by Lemma \ref{le:smooth}, for every $1\leq p<\fz$, $C^\fz(\overline{\boz_{p, q}})\cap W^{1, p}(\boz_{p, q})$ is dense in $W^{1, p}(\boz_{p, q})$.


Next, we show that $\boz_{p, q}$ is a $(W^{1,p},W^{1,q})$-extension domain. The essential point is that every cylinder whose size is fixed from above, is a $(W^{1,p},W^{1,q})$-extension domain for $1\leq q<n-1$ and $(n-1)q/(n-1-q)\leq p<\fz$. This observation was used in the paper \cite{KUZ:JFA} by Koskela, Ukhlov and Zhu to show that for $1\leq q<n-1$ and $(n-1)q/(n-1-q)<p\leq\fz$, there exists a bounded $(W^{1,p},W^{1,q})$-extension domain $\boz\subset\rn$ whose boundary is of positive volume. The same observation was used in the paper \cite{KZ:cusp} by Koskela and Zhu to show that every outward cuspidal domain in $\rn$ with $n\geq 3$ is always a $(W^{1,p}, W^{1,q})$-extension domain for $1\leq q<n-1$ and $(n-1)q/(n-1-q)\leq p\leq\fz$. This result is a limit case for a sequence of results about Sobolev extension property of polynomial type outward cuspidal domains by Maz'ya and Poborchi in \cite{Mazya, Mazya1, Mazya2, Mazya3}.

\subsection{Cut-off functions}
Let $\widetilde{\mathcal C}:= B^{n-1}(0,r)\times[0, 1]$ and $\mathcal C:=\overline{B^{n-1}}(0, \frac{r}{2})\times[0, 1]$. Then $\widetilde{\mathcal C}$ is a cylinder and $\mathcal C$ is a closed sub-cylinder of $\widetilde{\mathcal C}$. We define $A_\mathcal C:=\widetilde{\mathcal C}\setminus\mathcal C$. We employ the cylindrical coordinate system
\[\lf\{x=(x_1, x_2, \cdots, x_n)=(s, \theta_1, \theta_2, \cdots, \theta_{n-2}, x_n)\in\rn\r\}\]
where $\{(0, 0, \cdots, x_n):x_n\in\rr\}$ is the rotation axis and $s:=\sqrt{\sum_{i=1}^{n-1}x_i^2}$. To simplify the notation, we write $\vec{\theta}=(\theta_1, \theta_2, \cdots, \theta_{n-2})$. In this cylindrical coordinate system, we can write
\[\widetilde{\mathcal C}=\lf\{x=(s,\vec{\theta}, x_n)\in\rn: x_n\in [0, 1], s\in[0, r), \vec{\theta}\in [0, \pi)^{n-3}\times[0, 2\pi)\r\}, \]
\[{\mathcal C}=\lf\{x=(s,\vec{\theta}, x_n)\in\rn: x_n\in [0, 1], s\in\lf[0, \frac{r}{2}\r], \vec{\theta}\in [0, \pi)^{n-3}\times[0, 2\pi)\r\}\]
and
\[A_{\mathcal C}=\lf\{x=(s,\vec{\theta}, x_n)\in\rn: x_n\in [0, 1], s\in\lf(\frac{r}{2}, r\r), \vec{\theta}\in [0, \pi)^{n-3}\times[0, 2\pi)\r\}.\]
For $1<r<2$, we define two subsets $D^U_{\mathcal C}$ and $D^L_{\mathcal C}$ of the cylinder $\widetilde C$ by setting 
\[D^U_{\mathcal C}:=\lf\{x=\lf(s, \vec{\theta}, x_n\r)\in\rn: x_n\in\lf(1-\frac{r}{2},1\r), s\in\lf(\frac{r}{2}, x_n+r-1\r), \vec{\theta}\in [0, \pi)^{n-3}\times[0, 2\pi) \r\}\]
and
\[D^L_{\mathcal C}:=\lf\{x=\lf(s, \vec{\theta}, x_n\r)\in\rn: x_n\in\lf(0, \frac{r}{2}\r), s\in\lf(\frac{r}{2}, r-x_n\r), \vec{\theta}\in[0, \pi)^{n-3}\times[0, 2\pi)\r\}.\]

The next lemma gives three cut-off functions towards the construction of the desired extension operator. It is almost the same as \cite[Lemma $7.2$]{KUZ:JFA}.
\begin{lem}\label{le:cutoff}

$(1)$: There exists a cut-off function $L^i_{\mathcal C}:\overline{A_\mathcal C}\to [0, 1]$, which is continuous on $\overline{A_\mathcal C}\setminus\lf(\partial B^{n-1}\lf(0, \frac{r}{2}\r)\times\{0, 1\}\r)$, which equals $0$ both on $\lf(\overline{B}^{n-1}(0, r)\setminus B^{n-1}\lf(0, \frac{r}{2}\r)\r)\times\{0, 1\}$ and on the set $\partial B^{n-1}(0, r)\times(0, 1)$, which equals $1$ on $\partial B^{n-1}\lf(0, \frac{r}{2}\r)\times(0, 1)$ and which has the following additional properties. The function $L^i_{\mathcal C}$ is Lipschitz on $A_\mathcal C\setminus\lf(\overline{D^U_\mathcal C}\cup\overline{D^L_{\mathcal C}}\r)$ with 
\[|\nabla L^i_{\mathcal C}(x)|\leq\frac{C}{r}\ {\rm for}\ x\in A_\mathcal C\setminus\lf(\overline{D^U_\mathcal C}\cup\overline{D^L_\mathcal C}\r)\]
and $L^i_\mathcal C$ is locally Lipschitz on $D^U_\mathcal C$ with 
\[|\nabla L^i_\mathcal C(x)|\leq\frac{C}{\sqrt{\lf(s-\frac{r}{2}\r)^2+(1-x_n)^2}}\ {\rm for}\ x\in D^U_\mathcal C\]
and $L^i_\mathcal C$ is locally Lipschitz on $D^L_\mathcal C$ with 
\[|\nabla L^i_\mathcal C(x)|\leq\frac{C}{\sqrt{\lf(s-\frac{r}{2}\r)^2+x_n^2}}\ {\rm for}\ x\in D^L_{\mathcal C}.\]

$(2)$:  There exists a cut-off function $L^o_\mathcal C:\overline{A_\mathcal C}\to[0,1]$, which is continuous on $\overline{A_\mathcal C}\setminus\lf(\partial B^{n-1}\lf(0, r\r)\times\{0 ,1\}\r)$, which equals $0$ on $\partial B^{n-1}\lf(0, \frac{r}{2}\r)\times[0, 1)$, and which equals $1$ both on $\partial B^{n-1}(0, r)\times(0, 1)$ and on $\lf(B^{n-1}(0, r)\setminus\overline{B}^{n-1}\lf(0, \frac{r}{2}\r)\r)\times\{0, 1\}$, and which has the additional following properties. The function $L^o_\mathcal C$ is Lipschitz on $A_\mathcal C\setminus\lf(\overline{D^U_\mathcal C}\cup\overline{D^L_\mathcal C}\r)$  with 
\[|\nabla L^o_{\mathcal C}(x)|\leq\frac{C}{r}\ {\rm for}\ x\in A_\mathcal C\setminus\lf(\overline{D^U_\mathcal C}\cup\overline{D^L_\mathcal C}\r)\]
and $L^o_\mathcal C$ is locally Lipschitz on $D^U_\mathcal C$ with 
\[|\nabla L^o_\mathcal C(x)|\leq\frac{C}{\sqrt{\lf(s-\frac{r}{2}\r)^2+(1-x_n)^2}}\ {\rm for}\ x\in D^U_\mathcal C\]
and $L^o_\mathcal C$ is locally Lipschitz on $D^L_\mathcal C$ with 
\[|\nabla L^o_\mathcal C(x)|\leq\frac{C}{\sqrt{\lf(s-\frac{r}{2}\r)^2+x_n^2}}\ {\rm for}\ x\in D^L_{\mathcal C}.\]
\end{lem}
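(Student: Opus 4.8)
The plan is to construct explicit cut-off functions adapted to the annular region $A_{\mathcal C}$ between the two coaxial cylinders of radii $r/2$ and $r$, and to check the gradient bounds region by region. Since the stated lemma is essentially \cite[Lemma~7.2]{KUZ:JFA}, I would follow that blueprint. The intuition is that $A_{\mathcal C}$ is a topological annulus times an interval, so a radial interpolation $s\mapsto \frac{r-s}{r-r/2}$ (for $L^i_{\mathcal C}$) would already give a function equal to $1$ on the inner wall, $0$ on the outer wall, with gradient $\lesssim 1/r$; the only trouble is on the two ``lids'' $s\in(r/2,r)$, $x_n\in\{0,1\}$, where we are told $L^i_{\mathcal C}$ must vanish. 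Forcing the function to be $0$ on the lids but $1$ on the inner wall creates a clash precisely along the two circles $\partial B^{n-1}(0,r/2)\times\{0,1\}$, which is exactly why continuity is only claimed on $\overline{A_{\mathcal C}}$ minus those circles, and why the sets $D^U_{\mathcal C}$ and $D^L_{\mathcal C}$ (wedge-shaped neighbourhoods of those circles inside the annulus) are singled out with weaker, distance-to-the-circle gradient bounds.

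Concretely, first I would fix the ``bulk'' definition: on $A_{\mathcal C}\setminus(\overline{D^U_{\mathcal C}}\cup\overline{D^L_{\mathcal C}})$, set $L^i_{\mathcal C}(x):=\eta\!\left(\tfrac{s-r/2}{r/2}\right)$ for a fixed smooth $\eta:[0,1]\to[0,1]$ with $\eta(0)=1$, $\eta(1)=0$; this is Lipschitz with $|\nabla L^i_{\mathcal C}|\le C/r$ since $\eta'$ is bounded and the $s$-variable has unit-speed gradient. Second, on the upper wedge $D^U_{\mathcal C}=\{x_n\in(1-\tfrac r2,1),\ s\in(\tfrac r2,\,x_n+r-1)\}$ I would interpolate in the angular variable of the two-dimensional $(s-\tfrac r2, 1-x_n)$-plane: writing $\rho=\sqrt{(s-\tfrac r2)^2+(1-x_n)^2}$ and letting $\alpha\in[0,\pi/2]$ be the polar angle measured so that $\alpha=0$ corresponds to the lid $x_n=1$ and $\alpha=\pi/2$ to the inner wall $s=r/2$, set $L^i_{\mathcal C}:=\tfrac{2}{\pi}\alpha$ (suitably reconciled with the bulk formula along the ray $s=x_n+r-1$ where $\rho$ meets the bulk region). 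Then $\nabla L^i_{\mathcal C}$ is purely angular, so $|\nabla L^i_{\mathcal C}|=\tfrac{2}{\pi}|\nabla\alpha|=\tfrac{2}{\pi\rho}$, giving exactly the claimed bound $C/\sqrt{(s-\tfrac r2)^2+(1-x_n)^2}$; the lower wedge $D^L_{\mathcal C}$ is handled identically with $x_n$ replacing $1-x_n$. Continuity on $\overline{A_{\mathcal C}}$ away from the two circles follows because $\alpha$ is continuous there, while at the circles themselves $\alpha$ has a genuine jump (the lid wants value $0$, the wall wants value $1$), which is the advertised exception. Part $(2)$, the construction of $L^o_{\mathcal C}$, is obtained by the symmetric recipe: swap the roles of the inner and outer walls, i.e. replace $\eta$ by $1-\eta$ in the bulk, replace $\rho$ by the distance to the appropriate circle and $\alpha$ by its complement, so that $L^o_{\mathcal C}=1-L^i_{\mathcal C}$ off the boundary (this even makes the partition-of-unity property $L^i_{\mathcal C}+L^o_{\mathcal C}=1$ transparent), and the gradient bounds are literally the same.

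The only genuinely delicate point is the matching along the interfaces $\partial D^U_{\mathcal C}$ and $\partial D^L_{\mathcal C}$: the bulk formula depends on $s$ alone while the wedge formula depends on the polar angle $\alpha$, so I would choose the opening of the wedges (the condition $s<x_n+r-1$) precisely so that the two formulas agree, and $L^i_{\mathcal C}$ is \emph{locally} Lipschitz (not globally Lipschitz) on each wedge because $|\nabla L^i_{\mathcal C}|$ blows up like $1/\rho$ as one approaches the offending circle. Thus the main obstacle is bookkeeping rather than conceptual: one must verify that the piecewise definition glues to a single function continuous on $\overline{A_{\mathcal C}}$ minus the two circles, satisfies the prescribed boundary values on each face of $\overline{A_{\mathcal C}}$, and meets the three separate gradient estimates. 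Since all of this is carried out in \cite[Lemma~7.2]{KUZ:JFA} and our cylinders differ only by harmless rescaling, I would present the construction and the estimates and refer to that source for the remaining routine verifications.
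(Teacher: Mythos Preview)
Your plan is essentially the paper's own: a radial interpolation in the bulk of $A_{\mathcal C}$ and an ``angular'' interpolation in the two wedges $D^U_{\mathcal C}$, $D^L_{\mathcal C}$ near the singular circles, with $L^o_{\mathcal C}=1-L^i_{\mathcal C}$ in the interior. The one genuine gap is precisely the step you label ``suitably reconciled'': your specific choice $L^i_{\mathcal C}=\tfrac{2}{\pi}\alpha$ on the wedges does \emph{not} match the bulk formula along the interface $s=x_n+r-1$. For example, on $\partial D^U_{\mathcal C}$ at the point where $s-\tfrac r2=\tfrac r3$ and $1-x_n=\tfrac r6$, the linear bulk value is $\tfrac{2(r-s)}{r}=\tfrac13$, while $\tfrac{2}{\pi}\arctan\tfrac12\approx 0.295$. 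So the piecewise function you describe is not continuous across the wedge boundary, and some actual work is hidden in ``suitably reconciled''.

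The paper resolves this by choosing a different function of the angular variable: on $D^U_{\mathcal C}$ it sets
\[
L^i_{\mathcal C}(x)=\frac{1-x_n}{(1-x_n)+\bigl(s-\tfrac r2\bigr)},
\]
and analogously on $D^L_{\mathcal C}$ with $x_n$ in place of $1-x_n$. This is still a function of the polar angle alone (namely $\frac{\sin\alpha}{\sin\alpha+\cos\alpha}$ in your notation), so your gradient computation $|\nabla L^i_{\mathcal C}|\le C/\rho$ still applies; but along the interface $s=x_n+r-1$ one has $(1-x_n)+(s-\tfrac r2)=\tfrac r2$ and $1-x_n=r-s$, whence the wedge formula equals $\tfrac{2(r-s)}{r}$ exactly, which is the bulk value $\tfrac{-2}{r}s+2$. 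Thus the paper's choice glues continuously with no further adjustment, and then $L^o_{\mathcal C}=1-L^i_{\mathcal C}$ in the interior, as you anticipated. With this substitution your outline becomes a complete proof.
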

\begin{proof}

$(1):$ We define the cut-off function $L^i_\mathcal C$ on $\overline{A_\mathcal C}$ with respect to the cylindrical coordinate system $\lf\{x=\lf(s, \vec{\theta}, x_n\r)\in \rn\r\}$ by setting 
\begin{equation}\label{eq:cutoff1}
L^i_\mathcal C(x)=\begin{cases}
\frac{-2}{r}s+2,\ \ &\ {\rm if}\ x\in\overline{A_\mathcal C}\setminus\lf(\overline{D^U_\mathcal C}\cup\overline{D^L_\mathcal C}\r),\\
\frac{x_n}{x_n+\lf(s-\frac{r}{2}\r)}, \ \ &\ {\rm if}\ x\in \overline{D^L_\mathcal C}\setminus\partial B^{n-1}\lf(0, \frac{r}{2}\r)\times\{0\},\\
\frac{1-x_n}{(1-x_n)+\lf(s-\frac{r}{2}\r)}, \ \ &\ {\rm if}\ x\in\overline{D^U_\mathcal C}\setminus\partial B^{n-1}\lf(0, \frac{r}{2}\r)\times\{1\},\\
0, \ \ &\ {\rm if}\ x\in\partial B^{n-1}\lf(0, \frac{r}{2}\r)\times\{0, 1\}.
\end{cases}
\end{equation}
Then, if $x\in A_\mathcal C\setminus\lf(\overline{D^U_\mathcal C}\cup\overline{D^L_\mathcal C}\r)$, we have 
\[\frac{\partial L^i_\mathcal C(x)}{\partial\theta_1}=\cdots=\frac{\partial L^i_\mathcal C(x)}{\theta_{n-2}}=\frac{\partial L^i_\mathcal C(x)}{\partial x_n}=0\ {\rm and}\ \left|\frac{\partial L^i_\mathcal C(x)}{\partial s}\right|=\frac{2}{r}.\]
If $x\in D^L_\mathcal C$, we have 
\[\frac{\partial L^i_\mathcal C(x)}{\partial\theta_1}=\cdots=\frac{\partial L^i_\mathcal C(x)}{\partial\theta_{n-2}}=0,\]
\[\left|\frac{\partial L^i_\mathcal C(x)}{\partial s}\right|=\lf|\frac{x_n}{\lf(x_n+\lf(s-\frac{r}{2}\r)\r)^2}\r|\leq\lf|\frac{x_n+\lf(s-\frac{r}{2}\r)}{\lf(x_n+\lf(s-\frac{r}{2}\r)\r)^2}\r|\leq\frac{1}{\sqrt{\lf(s-\frac{r}{2}\r)^2+x_n^2}}\]
and
\[\lf|\frac{\partial L^i_\mathcal C(x)}{\partial x_n}\r|=\lf|\frac{\lf(s-\frac{r}{2}\r)}{\lf(x_n+\lf(s-\frac{r}{2}\r)\r)^2}\r|\leq\lf|\frac{x_n+\lf(s-\frac{r}{2}\r)}{\lf(x_n+\lf(s-\frac{r}{2}\r)\r)^2}\r|\leq\frac{1}{\sqrt{\lf(s-\frac{r}{2}\r)^2+x_n^2}}.\]
If $x\in D^U_\mathcal C$, we have 
\[\frac{\partial L^i_\mathcal C(x)}{\partial\theta_1}=\cdots=\frac{\partial L^i_\mathcal C(x)}{\partial\theta_{n-2}}=0,\]
\[\left|\frac{\partial L^i_\mathcal C(x)}{\partial s}\right|=\lf|\frac{1-x_n}{\lf(x_n+\lf(s-\frac{r}{2}\r)\r)^2}\r|\leq\lf|\frac{(1-x_n)+\lf(s-\frac{r}{2}\r)}{\lf((1-x_n)+\lf(s-\frac{r}{2}\r)\r)^2}\r|\leq\frac{1}{\sqrt{\lf(s-\frac{r}{2}\r)^2+(1-x_n)^2}}\]
and
\[\lf|\frac{\partial L^i_\mathcal C(x)}{\partial x_n}\r|=\lf|\frac{\lf(s-\frac{r}{2}\r)}{\lf((1-x_n)+\lf(s-\frac{r}{2}\r)\r)^2}\r|\leq\lf|\frac{(1-x_n)+\lf(s-\frac{r}{2}\r)}{\lf((1-x_n)+\lf(s-\frac{r}{2}\r)\r)^2}\r|\leq\frac{1}{\sqrt{\lf(s-\frac{r}{2}\r)^2+(1-x_n)^2}}.\]
Hence, we have 
\begin{equation}\label{eq:deri1}
\lf|\nabla L^i_\mathcal C(x)\r|\leq\begin{cases}
\frac{C}{r},\ \ &{\rm for}\ \ x\in A_\mathcal C\setminus\lf(\overline{D^U_\mathcal C}\cup\overline{D^L_\mathcal C}\r),\\
\frac{C}{\sqrt{\lf(s-\frac{r}{2}\r)^2+x_n^2}},\ \ &{\rm for}\ \ x\in D^L_\mathcal C,\\
\frac{C}{\sqrt{\lf(s-\frac{r}{2}\r)^2+\lf(1-x_n\r)^2}},\ \ &{\rm for}\ \ x\in D^U_\mathcal C.
\end{cases}
\end{equation}

$(2):$ We define the cut-off function $L^o_\mathcal C$ on $\overline{A_\mathcal C}$ with respect the cylindrical coordinate system $\lf\{x=\lf(s, \vec{\theta}, x_n\r)\in\rn\r\}$ by setting 
\begin{equation}\label{eq:cutoff2}
L^o_\mathcal C(x)=\begin{cases}
\frac{2}{r}s-1,\ \ &{\rm if}\ x\in\overline{A_\mathcal C}\setminus\lf(\overline{D^U_\mathcal C}\cup\overline{D^L_\mathcal C}\r),\\
\frac{s-\frac{r}{2}}{x_n+\lf(s-\frac{r}{2}\r)},\ \ &{\rm if}\ x\in\overline{D^L_\mathcal C}\setminus\partial B^{n-1}\lf(0, \frac{r}{2}\r)\times\{0\},\\
\frac{s-\frac{r}{2}}{(1-x_n)+\lf(s-\frac{r}{2}\r)},\ \ &{\rm if}\ x\in\overline{D^U_\mathcal C}\setminus\partial B^{n-1}\lf(0, \frac{r}{2}\r)\times\{1\},\\
0,\ \ &{\rm if}\ x\in\partial B^{n-1}\lf(0, \frac{r}{2}\r)\times\{0, 1\}.\end{cases}
\end{equation}
By similar computations, we have 
\begin{equation}\label{eq:deri2}
\lf|\nabla L^o_\mathcal C(x)\r|\leq\begin{cases}
\frac{C}{r},\ \ &{\rm for}\ \ x\in A_\mathcal C\setminus\lf(\overline{D^U_\mathcal C}\cup\overline{D^L_\mathcal C}\r),\\
\frac{C}{\sqrt{\lf(s-\frac{r}{2}\r)^2+x_n^2}},\ \ &{\rm for}\ \ x\in D^L_\mathcal C,\\
\frac{C}{\sqrt{\lf(s-\frac{r}{2}\r)^2+\lf(1-x_n\r)^2}},\ \ &{\rm for}\ \ x\in D^U_\mathcal C.
\end{cases}
\end{equation}
\end{proof}

\subsection{The extension operator}
Towards the construction of our extension operator, we define piston-shaped domains $P_k$ by setting
\[P_k:=2\mathcal B_k\times(0, 1)\cup\mathcal B_k\times[1, 2).\]
The class of piston-shaped domains $\{P_k\}$ are pairwise disjoint. Given cylinders $\mathscr C_k$ and  $\mathcal C_k$, to simplify the notation, we write $L_k=L_{\mathscr C_k}, L^i_k=L^i_{\mathcal C_k}$ and $L^o_k=L^o_{\mathcal C_k}$. We define cut-off functions $L$, $L^i$ and $L^o$ by setting 
\begin{equation}\label{4.8}
\mathscr L^i(x):=\sum_kL^i_k(x)\ {\rm for}\ x\in\bigcup_k\overline{\mathscr A_k},
\end{equation}
\begin{equation}\label{4.9}
L^i(x):=\sum_kL^i_k(x)\ {\rm for}\ x\in\bigcup_k\overline{\mathcal A_k}
\end{equation}
and
\begin{equation}\label{4.10}
L^o(x):=\sum_kL^o_k(x)\ {\rm for}\ x\in\bigcup_k\overline{\mathcal A_k}.
\end{equation}
On $\mathscr Q_0\times[1, 2]$, we define a cut-off function $L_1$ by setting 
\begin{equation}\label{eq:Ref1}
L_1(x):=2-x_n\ {\rm for\ every}\ x=(x_1, x_2, \cdots, x_n)\in\mathscr Q_0\times[1, 2].
\end{equation}

On $\mathscr Q_0\times(1, 2)$, we define a reflection $ R_1$ by setting 
\begin{equation}\label{eq:ref1}
 R_1(x):=\lf(x_1, x_2,\cdots, x_{n-1}, 2-x_n\r)\ {\rm for\ every}\ x=(x_1, x_2, \cdots, x_n)\in\mathscr Q_0\times(1, 2).
\end{equation}
Simple computations give the estimates
\begin{equation}\label{eq:esti1}
\frac{1}{C}\leq\lf|J_{ R_1}(x)\r|\leq C\ {\rm and}\ \lf|DR_1(x)\r|\leq C,
\end{equation}
for every $x\in\mathscr Q_0\times(1, 2)$.

For every $k\in\mathbb N$, we define two cylinders by setting 
\[\widetilde{\mathscr C_k}:=2\mathscr B_k\times (2, 3)\ {\rm and}\ \widetilde{\mathcal C_k}:=2\mathcal B_k\times[1, 2],\]
and define 
\[\mathscr A_k:=\widetilde{\mathscr C_k}\setminus\overline{\mathscr C_k}\ {\rm and}\ \mathcal A_k:=\widetilde{\mathcal C_k}\setminus\overline{\mathcal C_k}.\]
We also define 
\[\mathscr D^L_k:= D^L_{\mathscr C_k}, \mathcal D^L_k:= D^L_{\mathcal C_k}\ {\rm and}\ \mathcal D^U_{k}:=D^U_{\mathcal C_k}.\]
On the set $\bigcup_{k}\mathscr A_k$, we define a mapping $\mathscr R$ which is a reflection on every $\mathscr A_k$. With respect to the local cylindrical coordinate system on every $\mathscr A_k$, we write
\begin{equation}\label{eq:ref3}
\mathscr R(x):=\mathscr R\lf(s, \vec{\theta}, x_n\r)=\lf(-\frac{s}{2}+\frac{3}{2}\tilde r_k, \vec{\theta}, x_n\r)
\end{equation}
for $x=(s, \vec{\theta}, x_n)\in\mathscr A_k$. Simple computations give the estimate
\begin{equation}\label{eq:esti3}
\frac{1}{C}\leq\lf|J_{\mathscr R}(x)\r|\leq C\ {\rm and}\ \lf|D\mathscr R(x)\r|\leq C
\end{equation}
for every $x\in\bigcup_{k}\mathscr A_k$. On the set $\bigcup_{k}\mathcal A_k$, we define a mapping $\mathcal R$ which is a reflection on every $\mathcal A_k$. With respect to the local cylindrical coordinate system on every $\mathcal A_k$, we write
\begin{equation}\label{eq:ref4}
\mathcal R(x):=\mathcal R\lf(s, \vec{\theta}, x_n\r)=\lf(-\frac{s}{2}+\frac{3}{2}r_k, \vec{\theta}, x_n\r)
\end{equation}
for $x=(s, \vec{\theta}, x_n)\in\mathcal A_k$. Simple computations give the estimate
\begin{equation}\label{eq:esti4}
\frac{1}{C}\leq\lf|J_\mathcal R(x)\r|\leq C\ {\rm and}\ \lf|D\mathcal R(x)\r|\leq C
\end{equation}
for every $x\in\bigcup_{k}\mathcal A_k$.

Define $\mathcal C:=\mathscr Q_0\times(0, 3)$ to be a cylinder which contains the domain $\boz_{p, q}$. In order to explain the construction of our bounded linear extension operator, we need to define a subset $U_1$ of $\mathcal C$. Define 
$$U_1:=\mathscr Q_0\times[1, 2]\setminus\bigcup_k{\mathcal C_k} $$
  As we said, $\boz_{p, q}\subset\rn$ satisfies the segment condition. Hence, $C^\fz(\overline{\boz_{p, q}})\cap W^{1, p}(\boz_{p, q})$ is dense in $W^{1, p}(\boz_{p, q})$. We begin by defining our linear extension operator on the dense subspace $C^\fz(\overline{\boz_{p, q}})\cap W^{1, p}(\boz_{p, q})$ of $W^{1, p}(\boz_{p, q})$. Given $u\in C^\fz(\overline{\boz_{p, q}})\cap W^{1, p}(\boz_{p, q})$, we define the extension $E(u)$ on the rectangle $\mathcal C$ by setting 
\begin{equation}\label{eq:extension}
E(u)(x):=\begin{cases}
u(x), \ & {\rm for}\ x\in\boz_{p, q},\\
L_1(x)\lf(u\circ R_1(x)\r), \ & {\rm for}\ x\in U_1\setminus\bigcup_k\overline{\mathcal A_k},\\
\mathscr L^i(x)\lf(u\circ\mathscr R(x)\r), \ & {\rm for}\  x\in\bigcup_k\overline{\mathscr A_k},\\
L_1(x)L^o(x)\lf(u\circ R_1(x)\r)+L^i(x)\lf(u\circ\mathcal R(x)\r), \ & {\rm for}\ x\in\bigcup_k\overline{\mathcal A_k},\\
0, \ & {\rm elsewhere}.
\end{cases}
\end{equation}
We continue with the local properties of our extension operator.
\begin{lem}\label{le:extension}
Let $E$ be the extension operator defined in (\ref{eq:extension}). Then, for every $u\in C^\fz(\overline{\boz_{p, q}})\cap W^{1, p}(\boz_{p, q})$, we have $E(u)\in C(\mathcal C) \cap W^{1,q}(\mathcal{C})$ and following properties:

$(1)$: $E(u)$ is differentiable almost everywhere on $\mathring U_1\setminus\bigcup_k\overline{\mathcal A_k}$ with 
\begin{equation}\label{eq:1}
\lf|\nabla E(u)(x)\r|\leq \lf|\nabla L_1(x)\lf(u\circ R_1(x)\r)\r|+\lf|L_1(x)\nabla\lf(u\circ R_1(x)\r)\r|
\end{equation}
for almost every $x\in\mathring{U_1}\setminus\bigcup_k\overline{\mathcal A_k}$.

$(2)$: $E(u)$ is differentiable almost everywhere on $\mathscr A_k$ with 
\begin{equation}\label{eq2}
\lf|\nabla E(u)(x)\r|\leq\lf|\nabla\mathscr L^i(x)\lf(u\circ\mathscr R(x)\r)\r|+\lf|\mathscr L^i(x)\nabla\lf(u\circ\mathscr R(x)\r)\r|
\end{equation}
for almost every $x\in\mathscr A_k$.

$(3)$: $E(u)$ is differentiable almost everywhere on $\mathcal A_k$ with 
\begin{multline}\label{eq3}
\lf|\nabla E(u)(x)\r|\leq\lf|\nabla \lf(L_1(x)L^o(x)\r)\lf(u\circ R_1(x)\r)\r|+\lf|L_1(x)L^o(x)\nabla\lf(u\circ R_1(x)\r)\r|\\
+\lf|\nabla L^i(x)\lf(u\circ\mathcal R(x)\r)\r|+\lf|L^i(x)\nabla\lf(u\circ\mathcal R(x)\r)\r|
\end{multline}
for almost every $x\in\mathcal A_k$.

Moreover, with respect to the local cylindrical coordinate system $x=\lf(s, \vec{\theta}, x_n\r)$ on $\widetilde{\mathscr C_k}$, for every $1\leq q<\fz$, we have 
\begin{equation}\label{eq:norm1}
\int_{\widetilde{\mathscr C_k}}\lf|E(u)(x)\r|^qdx\leq C\int_{\mathscr C_k}\lf|u(x)\r|^qdx
\end{equation}
and 
\begin{multline}\label{eq:norm2}
\int_{\widetilde{\mathscr C_k}}\lf|\nabla E(u)(x)\r|^qdx\leq C\int_{\mathscr C_k}\lf|\nabla u(x)\r|^qdx\\
+C\int_{\mathscr D^L_k}\lf(\sqrt{\frac{1}{x_n^2+\lf(s-\frac{\tilde r_k}{2}\r)^2}}\r)^q\lf|u\circ\mathscr R(x)\r|^qdx\\
+C\int_{\mathscr D^U_k}\lf(\sqrt{\frac{1}{\lf(1-x_n\r)^2+\lf(s-\frac{\tilde r_k}{2}\r)^2}}\r)^q\lf|u\circ\mathscr R(x)\r|^qdx\\
+C\int_{\mathscr A_k\setminus\lf(\overline{\mathscr D^L_k}\cup\overline{\mathscr D^U_k}\r)}\lf(\frac{1}{\tilde r_k}\r)^q\lf|u\circ\mathscr R(x)\r|^qdx,
\end{multline}
with some uniform positive constant $C$. With respect to the local cylindrical coordinate system $x=\lf(s, \vec{\theta}, x_n\r)$ on $\widetilde{\mathcal C_k}$, for every $1\leq q<\fz$, we have 
\begin{equation}\label{eq:norm3}
\int_{\widetilde{\mathcal C_k}}\lf|E(u)(x)\r|^qdx\leq C\int_{P_k}\lf|u(x)\r|^qdx
\end{equation}
and
\begin{multline}\label{eq:norm4}
\int_{\widetilde{\mathcal C_k}}\lf|\nabla E(u)(x)\r|^qdx\leq C\int_{P_k}\lf|\nabla u(x)\r|^qdx\\
+C\int_{\mathcal D^U_k}\lf(\sqrt{\frac{1}{\lf(1-x_n\r)^2+\lf(s-\frac{r_k}{2}\r)^2}}\r)^q\lf(\lf|u\circ R_1(x)\r|^q+\lf|u\circ\mathcal R(x)\r|^q\r)dx\\
+C\int_{\mathcal D^L_k}\lf(\sqrt{\frac{1}{x_n^2+\lf(s-\frac{r_k}{2}\r)^2}}\r)^q\lf(\lf|u\circ R_1(x)\r|^q+\lf|u\circ\mathcal R(x)\r|^q\r)dx\\
+C\int_{\mathcal A_k\setminus\lf(\overline{\mathcal D^L_k}\cup\overline{\mathcal D^U_k}\r)}\lf(\frac{1}{r_k}\r)^q\lf(\lf|u\circ R_1(x)\r|^q+\lf|u\circ\mathcal R(x)\r|^q\r)dx,
\end{multline}
with some uniform positive constant $C$.
\end{lem}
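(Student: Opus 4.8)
The plan is to handle the piecewise definition \eqref{eq:extension} region by region and then glue, following the scheme of \cite[Lemma 7.2]{KUZ:JFA}. Three things must be shown: that $E(u)$ is continuous on $\mathcal C$; that it is differentiable almost everywhere on $\mathring{U_1}\setminus\bigcup_k\overline{\mathcal A_k}$, on each $\mathscr A_k$ and on each $\mathcal A_k$, with the pointwise bounds \eqref{eq:1}, \eqref{eq2} and \eqref{eq3}; and that the change-of-variables estimates \eqref{eq:norm1}--\eqref{eq:norm4} hold. Granting these, $E(u)\in C(\mathcal C)\cap W^{1,q}(\mathcal C)$ follows: on $\boz_{p,q}$ one has $E(u)=u\in W^{1,p}(\boz_{p,q})\subset W^{1,q}(\boz_{p,q})$ since $q\le p$ and $\boz_{p,q}$ is bounded, on the remaining pieces $\nabla E(u)\in L^q$ by the pointwise bounds together with Lemma \ref{le:cutoff}, and a continuous function that lies in $W^{1,q}$ of each member of a locally finite decomposition of $\mathcal C$ and agrees across the (measure-zero) interfaces is globally $W^{1,q}$.

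\smallskip
\noindent For the \emph{continuity}, note that on the relative interior of each piece $E(u)$ is a product of a cut-off ($L_1$, $\mathscr L^i$, $L^i$, $L^o$, or $L_1L^o$), continuous by \eqref{eq:Ref1} and Lemma \ref{le:cutoff}, with a composition of $u\in C^\infty(\overline{\boz_{p,q}})$ with one of the reflections $R_1$, $\mathscr R$, $\mathcal R$, each of which is affine in ambient or cylindrical coordinates and maps its piece into $\overline{\boz_{p,q}}$. One then verifies that along every common boundary of two adjacent pieces the two formulas reduce to the same expression; this is exactly where the prescribed boundary values of the cut-offs (each equal to $0$ or $1$ on the inner and outer cylinder walls and on the top and bottom annular faces) and the fact that $R_1$, $\mathscr R$, $\mathcal R$ restrict to the identity on the relevant inner faces are used, including the matching of the collars with the ``$0$ elsewhere'' piece along the outer walls and top faces. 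For the \emph{pointwise bounds}, on the open part of each piece $E(u)$ is a finite sum of products $f\cdot(u\circ\Phi)$ with $f\in\{L_1,\mathscr L^i,L^i,L^o,L_1L^o\}$ and $\Phi\in\{R_1,\mathscr R,\mathcal R\}$: every $\Phi$ is bi-Lipschitz with $|D\Phi|\le C$ by \eqref{eq:esti1}, \eqref{eq:esti3}, \eqref{eq:esti4} and sends its piece into a compact subset of $\overline{\boz_{p,q}}$ on which $u$ and $\nabla u$ are bounded, so $u\circ\Phi$ is Lipschitz, while each $f$ is locally Lipschitz off the lower-dimensional $D^U/D^L$ edges by Lemma \ref{le:cutoff}; hence $E(u)$ is differentiable a.e.\ and the product and chain rules give \eqref{eq:1}, \eqref{eq2}, \eqref{eq3}, the factor $|D\Phi|$ absorbed into the constant.

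\smallskip
\noindent For \eqref{eq:norm1} and \eqref{eq:norm3}, split $\widetilde{\mathscr C_k}$ (resp.\ $\widetilde{\mathcal C_k}$) into the core cylinder, where $E(u)=u$, and the collar $\mathscr A_k$ (resp.\ $\mathcal A_k$), where $|E(u)|$ is at most $|u\circ\mathscr R|$ (resp.\ $|u\circ R_1|+|u\circ\mathcal R|$), and change variables via the reflections, using the Jacobian bounds \eqref{eq:esti3} (resp.\ \eqref{eq:esti1}, \eqref{eq:esti4}) and the fact that $\mathscr R$ maps $\mathscr A_k$ into $\mathscr C_k$ while $R_1$ and $\mathcal R$ map $\mathcal A_k$ into $P_k$. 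For \eqref{eq:norm2} and \eqref{eq:norm4}, start from \eqref{eq2} (resp.\ \eqref{eq3}): each term of the form $|f\,\nabla(u\circ\Phi)|$ is $\le C\,|(\nabla u)\circ\Phi|$ and, after the same change of variables, contributes $C\int_{\mathscr C_k}|\nabla u|^q$ (resp.\ $C\int_{P_k}|\nabla u|^q$); each term of the form $|\nabla f|\,|u\circ\Phi|$ is estimated by decomposing the collar as $\mathscr A_k=\mathscr D^L_k\cup\mathscr D^U_k\cup(\mathscr A_k\setminus(\overline{\mathscr D^L_k}\cup\overline{\mathscr D^U_k}))$ (resp.\ the analogue for $\mathcal A_k$) and inserting on each part the corresponding gradient bound for the cut-off from Lemma \ref{le:cutoff}, with $r$ replaced by $\tilde r_k$ (resp.\ $r_k$); for \eqref{eq:norm4} one also uses $|\nabla(L_1L^o)|\le|\nabla L_1|\,|L^o|+|L_1|\,|\nabla L^o|\le C+C|\nabla L^o|$. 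Collecting these contributions reproduces the right-hand sides of \eqref{eq:norm1}--\eqref{eq:norm4}.

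\smallskip
\noindent The individual estimates are routine; the main obstacle is the interface bookkeeping — making sure the cut-offs and reflections have been chosen so that $E(u)$ is genuinely continuous and lies in $W^{1,q}$ even near the $D^U$ and $D^L$ edges, where the cut-offs are only locally Lipschitz and their gradients blow up like the reciprocal distance to the edge. In particular one must check that on $\mathcal A_k$ the two reflection terms $L_1L^o\,(u\circ R_1)$ and $L^i\,(u\circ\mathcal R)$ are paired so that, the two reflections agreeing at the edge and the relevant cut-offs essentially summing to $1$ there, the leading singular parts of their gradients cancel, which is what makes $E(u)$ a Sobolev function and not merely one obeying the crude bounds \eqref{eq:1}--\eqref{eq3}.
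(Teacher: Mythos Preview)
Your proposal is correct and follows the same route as the paper, which is considerably terser: it asserts in one line that $E(u)\in W^{1,q}_{\mathrm{loc}}(\mathcal C)$ and is a.e.\ differentiable ``by the properties of our cut-off functions and reflections'', attributes \eqref{eq:1}--\eqref{eq3} to the chain rule, and then derives \eqref{eq:norm1}--\eqref{eq:norm4} by exactly the split-into-core-and-collar, change-of-variables, and insert-the-cut-off-gradient-bounds argument you outline, including the product-rule step $|\nabla(L_1L^o)|\le 2|\nabla L^o|$. One remark on your final paragraph: the cancellation you describe is genuine on $\mathcal A_k$ (there $L^i+L^o=1$ on the $D$-regions, so $\nabla L^i=-\nabla L^o$ and the two reflected values agree at the edge), but on $\mathscr A_k$ the extension is the single term $\mathscr L^i(u\circ\mathscr R)$ with no companion to cancel against --- the paper does not address this point either and simply records the crude bound \eqref{eq:norm2}, leaving the finiteness to the H\"older step in Theorem~\ref{th:Sobexten}.
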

\begin{proof}
For every $u\in C^\fz(\overline{\boz_{p, q}})\cap W^{1, p}(\boz_{p, q})$, by the properties of our cut-off functions and reflections and the definition of $E(u)$, it is easy to see that $E(u)$ belongs to $W^{1,q}_{loc}(\mathcal{C})$ and differentiable almost everywhere both on $\mathring{U_1}$, $\mathscr A_k$ and $\mathcal A_k$ for every $k\in\mathbb N$. The inequalities (\ref{eq:1}), (\ref{eq2}) and (\ref{eq3}) come from the chain rule. 

By the definition of $E(u)$ in (\ref{eq:extension}), on every $\widetilde{\mathscr C_k}$, we have 
\begin{equation}\label{eq4}
\int_{\widetilde{\mathscr C_k}}\lf|E(u)(x)\r|^qdx\leq\int_{\mathscr C_k}\lf|u(x)\r|^qdx+\int_{\mathscr A_k}\lf|\mathscr L^i(x)\lf(u\circ\mathscr R(x)\r)\r|^qdx.
\end{equation}
Since $0\leq\mathscr L^i(x)\leq 1$ for every $x\in\mathscr A_k$, by (\ref{eq:esti3}) and the change of variables formula, we have 
\begin{equation}\label{eq5}
\int_{\mathscr A_k}\lf|\mathscr L^i(x)\lf(u\circ\mathscr R(x)\r)\r|^qdx\leq\int_{\mathscr A_k}\lf|u\circ\mathscr R(x)\r|^qdx\leq C\int_{\mathscr C_k}\lf|u(x)\r|^qdx.
\end{equation}
By combining inequalities (\ref{eq4}) and (\ref{eq5}), we obtain the inequality (\ref{eq:norm1}). 

By inequality (\ref{eq2}), we have 
\begin{multline}\label{eq6}
\int_{\widetilde{\mathscr C_k}}\lf|\nabla E(u)(x)\r|^qdx\leq\int_{\mathscr C_k}\lf|\nabla u(x)\r|^qdx\\
+C\int_{\mathscr A_k}\lf|\nabla\mathscr L^i(x)\lf(u\circ\mathscr R(x)\r)\r|^qdx+C\int_{\mathscr A_k}\lf|\mathscr L^i(x)\nabla\lf(u\circ\mathscr R(x)\r)\r|^qdx
\end{multline}
Since $0\leq\mathscr L^i(x)\leq 1$ for every $x\in\mathscr A_k$, we have 
\begin{equation}\label{eq7}
\int_{\mathscr A_k}\lf|\mathscr L^i(x)\nabla\lf(u\circ\mathscr R(x)\r)\r|^qdx\leq\int_{\mathscr A_k}\lf|\nabla\lf(u\circ\mathscr R(x)\r)\r|^qdx.
\end{equation}
By (\ref{eq:deri1}), we have 
\begin{multline}\label{eq8}
\int_{\mathscr A_k}\lf|\nabla\mathscr L^i(x)\lf(u\circ\mathscr R(x)\r)\r|^qdx\leq\int_{\mathscr D^L_k}\lf(\sqrt{\frac{1}{x_n^2+\lf(s-\frac{\tilde r_k}{2}\r)^2}}\r)^q\lf|u\circ\mathscr R(x)\r|^qdx\\
+\int_{\mathscr D^U_k}\lf(\sqrt{\frac{1}{\lf(1-x_n\r)^2+\lf(s-\frac{\tilde r_k}{2}\r)^2}}\r)^q\lf|u\circ\mathscr R(x)\r|^qdx\\
+\int_{\mathscr A_k\setminus\lf(\overline{\mathscr D^L_k}\cup\overline{\mathscr D^U_k}\r)}\lf(\frac{1}{\tilde r_k}\r)^q\lf|u\circ\mathscr R(x)\r|^qdx.
\end{multline}
By combining inequalities (\ref{eq6}), (\ref{eq7}) and (\ref{eq8}), we obtain the inequality (\ref{eq:norm2}).

By (\ref{eq:extension}), we have 
\begin{multline}\label{eq9}
\int_{\widetilde{\mathcal C_k}}\lf|E(u)(x)\r|^qdx\leq\int_{P_k}\lf|u(x)\r|^qdx\\
+C\int_{\mathcal A_k}\lf|L_1(x)L^o(x)\lf(u\circ R_1(x)\r)+L^i(x)\lf(u\circ\mathcal R(x)\r)\r|^qdx.
\end{multline}
Since $0\leq L_1(x)L^o(x)\leq 1$ and $0\leq L^i(x)\leq 1$ on $\mathcal A_k$, the definition of reflections and the change of variables formula imply 
\begin{multline}\label{eq10}
\int_{\mathcal A_k}\lf|L_1(x)L^o(x)\lf(u\circ R_1(x)\r)+L^i(x)\lf(u\circ\mathcal R(x)\r)\r|^qdx\leq C\int_{\mathcal A_k}\lf|u\circ R_1(x)\r|^qdx\\
+C\int_{\mathcal A_k}\lf|u\circ\mathcal R(x)\r|^qdx\leq C\int_{P_k}\lf|u(x)\r|^qdx.
\end{multline}
By (\ref{eq3}), we have 
\begin{equation}\label{eq11}
\int_{\widetilde{\mathcal C_k}}\lf|\nabla E(u)(x)\r|^qdx\leq\int_{P_k}\lf|\nabla u(x)\r|^qdx+ CI_1^k+CI_2^k,
\end{equation}
where 
\[I_1^k:=\int_{\mathcal A_k}\lf|L_1(x)L^o(x)\nabla\lf(u\circ R_1(x)\r)\r|^qdx+\int_{\mathcal A_k}\lf|L^i(x)\nabla\lf(u\circ\mathcal R(x)\r)\r|^qdx\]
and
\begin{equation}\label{eq11'}
I_2^k:=\int_{\mathcal A_k}\lf|\nabla\lf(L_1(x)L^o(x)\r)\lf(u\circ R_1(x)\r)\r|^qdx+\int_{\mathcal A_k}\lf|\nabla L^i(x)\lf(u\circ\mathcal R(x)\r)\r|^qdx.
\end{equation}
Since $0\leq L_1(x)L^o(x)\leq 1$ and $0\leq L^i(x)\leq 1$ on $\mathcal A_k$, the change of variables formula implies
\begin{equation}\label{eq12}
I^k_1\leq C\int_{P_k}\lf|\nabla u(x)\r|^qdx.
\end{equation}
The chain rule gives
\begin{equation}\label{eq12'}
\lf|\nabla\lf(L_1(x)L^o(x)\r)\r|\leq\lf|L^o(x)\nabla L_1(x)\r|+\lf|L_1(x)\nabla L^o(x)\r|\leq2\lf|\nabla L^o(x)\r|,
\end{equation}
for every $x\in\mathcal A_k$. Hence, by (\ref{eq:deri2}), we have 
\begin{multline}\label{eq13}
\int_{\mathcal A_k}\lf|\nabla\lf(L_1(x)L^o(x)\r)\lf(u\circ R_1(x)\r)\r|^qdx\leq C\int_{\mathcal A_k}\lf|\nabla L^o(x)\lf(u\circ R_1(x)\r)\r|^qdx\\
\leq\int_{\mathcal D^U_k}\lf(\sqrt{\frac{1}{\lf(1-x_n\r)^2+\lf(s-\frac{r_k}{2}\r)^2}}\r)^q\lf|u\circ R_1(x)\r|^qdx\\
+\int_{\mathcal D^L_k}\lf(\sqrt{\frac{1}{x_n^2+\lf(s-\frac{r_k}{2}\r)^2}}\r)^q\lf|u\circ R_1(x)\r|^qdx\\
+\int_{\mathcal A_k\setminus\lf(\overline{\mathcal D^U_k}\cup\overline{\mathcal D^L_k}\r)}\lf(\frac{1}{r_k}\r)^q\lf|u\circ R_1(x)\r|^qdx.
\end{multline}
By (\ref{eq:deri1}), we have 
\begin{multline}\label{eq14}
\int_{\mathcal A_k}\lf|\nabla L^i(x)\lf(u\circ\mathcal R(x)\r)\r|^qdx\leq\int_{\mathcal D^U_k}\lf(\sqrt{\frac{1}{\lf(1-x_n\r)^2+\lf(s-\frac{r_k}{2}\r)^2}}\r)^q\lf|u\circ R_1(x)\r|^qdx\\
+\int_{\mathcal D^L_k}\lf(\sqrt{\frac{1}{x_n^2+\lf(s-\frac{r_k}{2}\r)^2}}\r)^q\lf|u\circ R_1(x)\r|^qdx\\
+\int_{\mathcal A_k\setminus\lf(\overline{\mathcal D^U_k}\cup\overline{\mathcal D^L_k}\r)}\lf(\frac{1}{r_k}\r)^q\lf|u\circ R_1(x)\r|^qdx.
\end{multline}
By combining inequalities (\ref{eq11})-(\ref{eq14}), we obtain the inequality (\ref{eq:norm4}).
\end{proof}

\subsection{Proof of Theorem \ref{th:n-1}}
In this section, we give a proof for Theorem \ref{th:n-1}. We divide it into two parts. First, we prove that for every $1\leq q<n-1$ and $(n-1)q/(n-1-q)<p<\fz$, $E$ defined in (\ref{eq:extension}) is a bounded linear extension operator from $W^{1, p}(\boz_{p, q})$ to $W^{1, q}(\rn)$.

\begin{thm}\label{th:Sobexten}
For $1\leq q<n-1$ and $(n-1)q/(n-1-q)<p<\fz$, the corresponding domain $\boz_{p, q}\subset\rn$ constructed in Section \ref{construction} is a $(W^{1, p}, W^{1, q})$-extension domain. 
\end{thm}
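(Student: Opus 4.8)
The plan is to verify that the operator $E$ from \eqref{eq:extension} is bounded from $W^{1,p}(\boz_{p,q})$ to $W^{1,q}(\rn)$. Since $C^\infty(\overline{\boz_{p,q}})\cap W^{1,p}(\boz_{p,q})$ is dense, it suffices to establish the norm bound for $u$ in this subspace; boundedness on the whole space then follows by the usual density/linearity argument, together with the observation that $E(u)$ is continuous on $\mathcal C$ and vanishes near $\partial\mathcal C$, so it extends by zero to an element of $W^{1,q}(\rn)$. First I would decompose $\mathcal C=\mathscr Q_0\times(0,3)$ into the pieces on which $E(u)$ is defined by distinct formulas: the domain $\boz_{p,q}$ itself (where $E(u)=u$), the region $U_1\setminus\bigcup_k\overline{\mathcal A_k}$, the annular cylinders $\mathscr A_k$, the connecting annuli $\mathcal A_k$, and the complement where $E(u)=0$. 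On $\boz_{p,q}$ the $L^q$ and gradient norms are controlled by the $L^q$-norms of $u$ and $\nabla u$, which in turn are bounded by the $L^p$-norms since $\boz_{p,q}$ is bounded and $q\le p$. The region $U_1\setminus\bigcup_k\overline{\mathcal A_k}$ is a fixed bilipschitz image (via $R_1$) of a subregion of $\mathscr Q_0\times(1,2)\subset\boz_{p,q}$, so by \eqref{eq:esti1} the contribution there is again controlled by $\|u\|_{W^{1,p}}$.

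The heart of the matter is the sum over $k$ of the local estimates in Lemma \ref{le:extension}, namely \eqref{eq:norm1}--\eqref{eq:norm4}. The terms involving $\int_{\mathscr C_k}|\nabla u|^q$, $\int_{P_k}|\nabla u|^q$, $\int_{\mathscr C_k}|u|^q$, $\int_{P_k}|u|^q$ sum, by disjointness of the pistons $P_k$ and of the cylinders $\mathscr C_k$, to at most $C\|u\|_{W^{1,q}(\boz_{p,q})}^q\le C\|u\|_{W^{1,p}(\boz_{p,q})}^q$. The genuinely delicate terms are the ``bad'' integrals carrying the singular weights $1/\tilde r_k$, $1/r_k$ and $1/\sqrt{(s-\tfrac{r}{2})^2+x_n^2}$ multiplied by $|u\circ\mathscr R|^q$ or $|u\circ R_1|^q$. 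The plan here is, for each fixed $k$: (i) change variables through the bilipschitz reflection $\mathscr R$ (resp.\ $\mathcal R$, $R_1$) to pull these integrals back to integrals of $|u|^q$ over slices of $\mathscr C_k$ (resp.\ of $P_k$, of a slab in $\mathscr Q_0\times(1,2)$); (ii) on each such slice, bound $|u|$ pointwise by its average over a larger set plus an integral of $|\nabla u|$ along a curve, or more efficiently apply a Poincaré/Sobolev-trace inequality on the cylinder to compare the $L^q$-norm of $u$ on the thin neck to $\|u\|_{W^{1,p}}$ on a fixed reference cylinder; (iii) integrate the weight in the $(s,x_n)$ variables over $D^L_k\cup D^U_k$ and over $\mathcal A_k\setminus(\overline{D^L_k}\cup\overline{D^U_k})$. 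The integral of $\bigl(\sqrt{(s-\tfrac r2)^2+x_n^2}\bigr)^{-q}$ over the relevant triangular region, together with the $(n-2)$-dimensional angular factor whose measure is comparable to $r_k^{n-2}$, produces a power $r_k^{\,n-1-q}$ (here $q<n-1$ is exactly what makes this finite and, crucially, small), and similarly the $1/\tilde r_k$ and $1/r_k$ terms give factors $\tilde r_k^{\,n-1-q}$ and $r_k^{\,n-1-q}$.

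The remaining task is the bookkeeping of exponents: one must check that
\[
\sum_{k=1}^\infty r_k^{\,n-1-q}\,\bigl(\text{$L^q$ or $L^p$ data of $u$ near neck }k\bigr)
\]
is dominated by $\|u\|_{W^{1,p}(\boz_{p,q})}^q$. Since $r_k=(4^{-k})^{\frac1{n-1}+\frac pq}$, we get $r_k^{\,n-1-q}=4^{-k(n-1-q)(\frac1{n-1}+\frac pq)}$, and the summability, after inserting the reverse-Hölder/Poincaré comparison (which costs a positive power of the neck radius, using $p>q$ and the normalization of the cylinders to unit height), reduces to a geometric series with ratio $<1$; this is where the hypothesis $p>\tfrac{(n-1)q}{n-1-q}$ is used—it guarantees the exponent is strictly positive so the series converges and, moreover, that one may pass from the $L^q$ data to the $L^p$ data of $\nabla u$ via Hölder on the finite-measure necks with the right room to spare. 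Collecting the $\boz_{p,q}$-term, the $U_1$-term, the disjoint $\int_{\mathscr C_k}$ and $\int_{P_k}$ terms, and the summed bad terms yields $\|E(u)\|_{W^{1,q}(\mathcal C)}\le C\|u\|_{W^{1,p}(\boz_{p,q})}$, which is the assertion. I expect the main obstacle to be step (ii)--(iii): setting up the correct Poincaré-type estimate on each neck with a quantitative dependence on the neck radius $r_k$, and matching that power against $r_k^{\,n-1-q}$ so that the series converges precisely under the stated constraint on $p$; everything else is change of variables and the disjointness of the pieces.
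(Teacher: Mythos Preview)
Your overall architecture---density reduction, decomposition of $\mathcal C$ into $\boz_{p,q}$, $U_1\setminus\bigcup_k\overline{\mathcal A_k}$, $\bigcup_k\mathscr A_k$, $\bigcup_k\mathcal A_k$, invoking Lemma~\ref{le:extension}, and summing over $k$ using the disjointness of the $\mathscr C_k$ and $P_k$---is exactly the paper's plan. The divergence is in how you treat the ``bad'' weighted integrals, and there your proposal is more complicated than necessary and the exponents do not line up.

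The paper does \emph{not} use any Poincar\'e or trace inequality on the necks. It simply applies H\"older's inequality directly to each bad term, e.g.
\[
\sum_k\int_{\mathscr D^L_k} w_k(x)^q\,|u\circ\mathscr R(x)|^q\,dx
\ \le\
\Bigl(\sum_k\int_{\mathscr D^L_k}|u\circ\mathscr R|^p\Bigr)^{q/p}
\Bigl(\sum_k\int_{\mathscr D^L_k} w_k^{\frac{pq}{p-q}}\Bigr)^{(p-q)/p},
\]
where $w_k$ is the singular weight. The first factor is controlled by $\|u\|_{L^p(\boz_{p,q})}^q$ via the bilipschitz change of variables and disjointness. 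For the second factor one computes, in cylindrical coordinates,
\[
\int_{\mathscr D^L_k} w_k^{\frac{pq}{p-q}}\,dx \ \le\ C\,\tilde r_k^{\,n-2}\int_0^{\tilde r_k} l^{\,1-\frac{pq}{p-q}}\,dl\ \le\ C\,\tilde r_k^{\,n-\frac{pq}{p-q}},
\qquad
\int_{\mathscr A_k\setminus(\cdots)}\Bigl(\tfrac{1}{\tilde r_k}\Bigr)^{\frac{pq}{p-q}}dx\ \le\ C\,\tilde r_k^{\,n-1-\frac{pq}{p-q}},
\]
and likewise with $r_k$ in place of $\tilde r_k$. The hypothesis $p>\tfrac{(n-1)q}{n-1-q}$ is \emph{equivalent} to $\tfrac{pq}{p-q}<n-1$, which is precisely what makes these exponents positive and the geometric series $\sum_k \tilde r_k^{\,n-1-\frac{pq}{p-q}}$ and $\sum_k r_k^{\,n-1-\frac{pq}{p-q}}$ converge. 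So the role of the constraint on $p$ is transparent: it is the $L^{pq/(p-q)}$-integrability of the cut-off gradients.

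Your computation of the weight integral as $r_k^{\,n-1-q}$ corresponds to integrating $w_k^q$ alone, which only makes sense if you have already removed $|u|^q$ by an $L^\infty$ or trace bound; that extra step is what you call (ii), and it is neither needed nor clearly quantified in your outline. Replace steps (ii)--(iii) by the single H\"older split above with exponent $\tfrac{pq}{p-q}$, and the bookkeeping you anticipate as the ``main obstacle'' disappears: no Poincar\'e constant on the necks enters, and the summability follows immediately from $\tfrac{pq}{p-q}<n-1$.
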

\begin{proof}
Given $u\in C^\fz(\rn)\cap W^{1, p}(\boz_{p, q})$, the function $E(u)$ defined in (\ref{eq:extension}) belongs to $W^{1,q}_{loc}(\mathcal{C})$ and differentiable almost everywhere on $\mathcal C$. By the definition of $E(u)$ in (\ref{eq:extension}), we have 
\begin{equation}\label{eq15}
\int_{\mathcal C}\lf|E(u)(x)\r|^qdx=\lf(\int_{\boz_{p, q}}+\int_{\bigcup_k\mathscr A_k}+\int_{\bigcup_{k}\mathcal A_k}+\int_{U_1\setminus\bigcup_{k}\overline{\mathcal A_k}}\r)\lf|E(u)(x)\r|^qdx.
\end{equation}
The H\"older inequality implies 
\begin{equation}\label{eq16}
\int_{\boz_{p, q}}\lf|E(u)(x)\r|^q dx\leq C\lf(\int_{\boz_{p,q}}\lf|u(x)\r|^pdx\r)^{\frac{q}{p}}.
\end{equation}
Since the collection $\{\mathscr C_k\}$ is pairwise disjoint, the inequality (\ref{eq:norm1}) and the H\"older inequality imply 
\begin{eqnarray}\label{eq17}
\int_{\bigcup_k\mathscr A_k}\lf|E(u)(x)\r|^qdx&\leq&\int_{\bigcup_k\widetilde{\mathscr C_k}}\lf|E(u)(x)\r|^qdx\\
&\leq&C\int_{\bigcup_k\mathscr C_k}\lf|u(x)\r|^qdx\leq C\lf(\int_{\boz_{p, q}}\lf|u(x)\r|^pdx\r)^{\frac{q}{p}}.\nonumber
\end{eqnarray}
Since the collection $\{P_k\}$ is pairwise disjoint, the inequality (\ref{eq:norm3}) and the H\"older inequality imply 
\begin{eqnarray}\label{eq18}
\int_{\bigcup_k\mathcal A_k}\lf|E(u)(x)\r|^qdx&\leq&\int_{\bigcup_k\widetilde{\mathcal C_k}}\lf|E(u)(x)\r|^qdx\\
&\leq&C\int_{\bigcup_k\mathcal C_k}\lf|u(x)\r|^qdx\leq C\lf(\int_{\boz_{p, q}}\lf|u(x)\r|^pdx\r)^{\frac{q}{p}}.\nonumber
\end{eqnarray}
Since $0\leq L_1(x)\leq 1$ on $U_1\setminus\bigcup_k\overline{\mathcal A_k}$, by (\ref{eq:esti1}), the change of variables formula and the H\"older inequality, we have 
\begin{eqnarray}\label{eq19}
\int_{U_1\setminus\overline{\mathcal A_k}}\lf|E(u)(x)\r|^qdx&\leq &\int_{U_1\setminus\overline{\mathcal A_k}}\lf|u\circ R_1(x)\r|^qdx\\                    &\leq&C\int_{R_1\lf(U_1\setminus\overline{\mathcal A_k}\r)}\lf|u(x)\r|^qdx\leq C\lf(\int_{\boz_{p, q}}\lf|u(x)\r|^pdx\r)^{\frac{q}{p}}.\nonumber  
\end{eqnarray}
Consequently, by combining inequalities (\ref{eq15}) to (\ref{eq19}), we obtain 
\begin{equation}\label{eq20}
\lf(\int_{\mathcal C}\lf|E(u)(x)\r|^qdx\r)^{\frac{1}{q}}\leq C\lf(\int_{\boz_{p, q}}\lf|u(x)\r|^pdx\r)^{\frac{1}{p}},
\end{equation}
where the constant $C$ is independent of $u$.

By the definition of $E(u)$ in (\ref{eq:extension}), we have 
\begin{equation}\label{eq21}
\int_{\mathcal C}\lf|\nabla E(u)(x)\r|^q dx=\lf(\int_{\boz_{p, q}}+\int_{\bigcup_k\mathscr A_k}+\int_{\bigcup_{k}\mathcal A_k}+\int_{\mathring U_1\setminus\bigcup_{k}\overline{\mathcal A_k}}\r)\lf|\nabla E(u)(x)\r|^q dx.
\end{equation}
Similarly, the H\"older inequality implies 
\begin{equation}\label{eq22}
\int_{\boz_{p, q}}\lf|\nabla E(u)(x)\r|^qdx\leq C\lf(\int_{\boz_{p, q}}\lf|\nabla u(x)\r|^pdx\r)^{\frac{q}{p}}.
\end{equation}
By (\ref{eq:1}), we have 
\begin{eqnarray}\label{eq23}
\int_{\mathring{U_1}\setminus\bigcup_k\overline{\mathcal A_k}}\lf|\nabla E(u)(x)\r|^qdx&\leq&C\int_{\mathring{U_1}\setminus\bigcup_k\overline{\mathcal A_k}}\lf|\nabla L_1(x)\lf(u\circ R_1(x)\r)\r|^qdx\\
        & &+C\int_{\mathring{U_1}\setminus\bigcup_k\overline{\mathcal A_k}}\lf|L_1(x)\nabla\lf(u\circ R_1(x)\r)\r|^qdx.\nonumber
\end{eqnarray}
Obviously, $|\nabla L_1(x)|=1$ for every $x\in\mathring{U_1}\setminus\bigcup_k\overline{\mathcal A_k}$. By (\ref{eq:esti1}), the change of variables formula and the H\"older inequality, we have 
\begin{eqnarray}\label{eq24}
\int_{\mathring{U_1}\setminus\bigcup_k\overline{\mathcal A_k}}\lf|\nabla L_1(x)\lf(u\circ R_1(x)\r)\r|^qdx&\leq&C\int_{\mathring{U_1}\setminus\bigcup_k\overline{\mathcal A_k}}\lf|u\circ R_1(x)\r|^qdx\\
        &\leq&C\int_{R_1\lf(\mathring U_1\setminus\bigcup_k\overline{\mathcal A_k}\r)}\lf|u(x)\r|^qdx\nonumber\\
        &\leq&C\lf(\int_{\boz_{p, q}}\lf|u(x)\r|^pdx\r)^{\frac{q}{p}}.\nonumber
\end{eqnarray}
Since $0\leq L_1(x)\leq 1$ for every $x\in\mathring{U_1}\setminus\bigcup_k\overline{\mathcal A_k}$, also by (\ref{eq:esti1}), the change of variables formula and the H\"older inequality, we have 
\begin{eqnarray}\label{eq25}
\int_{\mathring{U_1}\setminus\bigcup_k\overline{\mathcal A_k}}\lf|L_1(x)\nabla\lf(u\circ R_1(x)\r)\r|^qdx&\leq&\int_{\mathring{U_1}\setminus\bigcup_k\overline{\mathcal A_k}}\lf|\nabla\lf(u\circ R_1(x)\r)\r|^q\\
        &\leq&C\int_{R_1\lf(\mathring{U_1}\setminus\bigcup_k\overline{\mathcal A_k}\r)}\lf|\nabla u(x)\r|^qdx\nonumber\\
        &\leq&C\lf(\int_{\boz_{p, q}}\lf|\nabla u(x)\r|^pdx\r)^{\frac{q}{p}}.\nonumber
\end{eqnarray}
By combining inequalities (\ref{eq23}), (\ref{eq24}) and (\ref{eq25}), we obtain 
\begin{equation}\label{eq26}
\lf(\int_{\mathring{U_1}\setminus\bigcup_k\overline{\mathcal A_k}}\lf|\nabla E(u)(x)\r|^qdx\r)^{\frac{1}{q}}\leq C\lf(\int_{\boz_{p, q}}\lf|u(x)\r|^p+\lf|\nabla u(x)\r|^pdx\r)^{\frac{1}{p}}
\end{equation}
By (\ref{eq:norm2}) and the fact that the collection $\{\widetilde{\mathscr C_k}\}$ is pairwise disjoint, we have 
\begin{eqnarray}\label{eq29}
\int_{\bigcup_k\mathscr A_k}\lf|\nabla E(u)(x)\r|^qdx&\leq&C\sum_k\int_{\mathscr C_k}\lf|\nabla u(x)\r|^qdx\\
&+&C\sum_k\int_{\mathscr D^L_k}\lf(\sqrt{\frac{1}{x_n^2+\lf(s-\frac{\tilde r_k}{2}\r)}}\r)^q\lf|u\circ\mathscr R(x)\r|^qdx\nonumber\\
&+&C\sum_k\int_{\mathscr D^U_k}\lf(\sqrt{\frac{1}{(1-x_n)^2+\lf(s-\frac{\tilde r_k}{2}\r)^2}}\r)^q\lf|u\circ\mathscr R(x)\r|^qdx\nonumber\\
&+&C\sum_k\int_{\mathscr A_k\setminus\lf(\overline{\mathscr D^L_k}\cup\overline{\mathscr D^U_k}\r)}\lf(\frac{1}{\tilde r_k}\r)^q\lf|u\circ\mathscr R(x)\r|^qdx.\nonumber
\end{eqnarray}
Since the collection $\{\mathscr C_k\}$ is pairwise disjoint, the H\"older inequality gives
\begin{equation}\label{eq30}
\sum_k\int_{\mathscr C_k}\lf|\nabla u(x)\r|^qdx=\int_{\bigcup_k\mathscr C_k}\lf|\nabla u(x)\r|^qdx\leq C\lf(\int_{\boz_{p, q}}\lf|\nabla u(x)\r|^pdx\r)^{\frac{q}{p}}.
\end{equation}
The H\"older inequality gives 
\begin{multline}\label{eq31}
\sum_k\int_{\mathscr D^L_k}\lf(\sqrt{\frac{1}{x_n^2+\lf(s-\frac{\tilde r_k}{2}\r)^2}}\r)^q\lf|u\circ\mathscr R(x)\r|^qdx\leq\lf(\sum_k\int_{\mathscr D^L_k}\lf|u\circ\mathscr R(x)\r|^pdx\r)^{\frac{q}{p}}\\
\times\lf(\sum_k\int_{\mathscr D^L_k}\lf(\sqrt{\frac{1}{x_n^2+\lf(s-\frac{\tilde r_k}{2}\r)^2}}\r)^{\frac{pq}{p-q}}\r)^{\frac{p-q}{p}}.
\end{multline}
By (\ref{eq:esti3}), the change of variables formula gives
\begin{equation}\label{eq32}
\sum_k\int_{\mathscr D^L_k}\lf|u\circ\mathscr R(x)\r|^pdx\leq C\sum_k\int_{\mathscr R\lf(\mathscr D^L_k\r)}\lf|u(x)\r|^pdx\leq C\int_{\boz_{p, q}}\lf|u(x)\r|^pdx.
\end{equation}
With 
$$l_k:=\sqrt{{x_n^2+\lf(s-\frac{\tilde r_k}{2}\r)^{\color{red}2}}},$$ 
by (\ref{eq:radius}), we have 
\begin{eqnarray}\label{eq33}
\sum_k\int_{\mathscr D^L_k}\lf(\sqrt{\frac{1}{x_n^2+\lf(s-\frac{\tilde r_k}{2}\r)^2}}\r)^{\frac{pq}{p-q}}&\leq& C\sum_k\tilde r_k\int_0^{\tilde r_k}l_k^{n-2-\frac{pq}{p-q}}dl_k\\
&\leq& C\sum_k\tilde r_k^{n-\frac{pq}{p-q}}\leq C\sum_k\lf(\frac{1}{2^{k+1}}\r)^{n-\frac{pq}{p-q}}<\fz.\nonumber
\end{eqnarray}
By combining (\ref{eq31}), (\ref{eq32}) and (\ref{eq33}), we obtain
\begin{equation}\label{eq34}
\sum_k\int_{\mathscr D^L_k}\lf(\sqrt{\frac{1}{x_n^2+\lf(s-\frac{\tilde r_k}{2}\r)^2}}\r)^q\lf|u\circ\mathscr R(x)\r|^qdx\leq C\lf(\int_{\boz_{p,q}}\lf|u(x)\r|^pdx\r)^{\frac{q}{p}}.
\end{equation}
Similarly, we obtain 
\begin{equation}\label{eq35}
\sum_k\int_{\mathscr D^U_k}\lf(\sqrt{\frac{1}{\lf(1-x_n\r)^2+\lf(s-\frac{\tilde r_k}{2}\r)^2}}\r)^q\lf|u\circ\mathscr R(x)\r|^qdx\leq C\lf(\int_{\boz_{p,q}}\lf|u(x)\r|^pdx\r)^{\frac{q}{p}}.
\end{equation}
Since the collection $\{\widetilde{\mathscr C_k}\}$ is pairwise disjoint, the H\"older inequality implies 
\begin{multline}\label{eq36}
\sum_k\int_{\mathscr A_k\setminus\lf(\overline{\mathscr D^L_k}\cup\overline{\mathscr D^U_k}\r)}\lf(\frac{1}{\tilde r_k}\r)^q\lf|u\circ\mathscr R(x)\r|^qdx\leq\lf(\sum_k\int_{\mathscr A_k\setminus\lf(\overline{\mathscr D^L_k}\cup\overline{\mathscr D^U_k}\r)}\lf|u\circ\mathscr R(x)\r|^pdx\r)^{\frac{q}{p}}\\
 \times\lf(\sum_k\int_{\mathscr A_k\setminus\lf(\overline{\mathscr D^L_k}\cup\overline{\mathscr D^U_k}\r)}\lf(\frac{1}{\tilde r_k}\r)^{\frac{pq}{p-q}}dx\r)^{\frac{p-q}{p}}.                         
\end{multline}
By (\ref{eq:esti3}), the change of variables formula gives
\begin{eqnarray}\label{eq37}
\sum_k\int_{\mathscr A_k\setminus\lf(\overline{\mathscr D^L_k}\cup\overline{\mathscr D^U_k}\r)}\lf|u\circ\mathscr R(x)\r|^pdx&\leq& C\sum_k\int_{\mathscr R\lf(\mathscr A_k\setminus\lf(\overline{\mathscr D^L_k}\cup\overline{\mathscr D^U_k}\r)\r)}\lf|u(x)\r|^pdx\\
        &\leq& C\int_{\boz_{p,q}}\lf|u(x)\r|^pdx.\nonumber
\end{eqnarray}
With a simple computation, we have 
\begin{equation}\label{eq38}
\sum_k\int_{\mathscr A_k\setminus\lf(\overline{\mathscr D^L_k}\cup\overline{\mathscr D^U_k}\r)}\lf(\frac{1}{\tilde r_k}\r)^{\frac{pq}{p-q}}dx\leq C\sum_k\tilde r_k^{n-1-\frac{pq}{p-q}}\leq C\sum_k\lf(2^{-k}\r)^{n-1-\frac{pq}{p-q}}<\fz.
\end{equation}
By combining (\ref{eq36}), (\ref{eq37}) and (\ref{eq38}), we obtain 
\begin{equation}\label{eq39}
\sum_k\int_{\mathscr A_k\setminus\lf(\overline{\mathscr D^L_k}\cup\overline{\mathscr D^U_k}\r)}\lf(\frac{1}{\tilde r_k}\r)^q\lf|u\circ\mathscr R(x)\r|^qdx\leq C\lf(\int_{\boz_{p, q}}\lf|u(x)\r|^pdx\r)^{\frac{q}{p}}.
\end{equation}
By combining (\ref{eq29}), (\ref{eq30}), (\ref{eq34}), (\ref{eq35}) and (\ref{eq39}), we obtain 
\begin{equation}\label{eq40}
\int_{\bigcup_k\mathscr A_k}\lf|\nabla E(u)(x)\r|^qdx\leq C\lf(\int_{\boz_{p, q}}\lf|u(x)\r|^p+\lf|\nabla u(x)\r|^pdx\r)^{\frac{q}{p}}.
\end{equation}
Since $\mathcal A_k$ is a subset of $\widetilde{\mathcal C_k}$, by (\ref{eq:norm4}), we have 
\begin{multline}\label{eq41}
\int_{\bigcup_k\mathcal A_k}\lf|\nabla E(u)(x)\r|^qdx\leq C\sum_k\int_{P_k}\lf|\nabla u(x)\r|^qdx\\
+C\sum_k\int_{\mathcal D^U_k}\lf(\sqrt{\frac{1}{\lf(1-x_n\r)^2+\lf(s-\frac{r_k}{2}\r)^2}}\r)^q\lf(\lf|u\circ R_1(x)\r|^q+\lf|u\circ\mathcal R(x)\r|^q\r)dx\\
+C\sum_k\int_{\mathcal D^L_k}\lf(\sqrt{\frac{1}{x_n^2+\lf(s-\frac{r_k}{2}\r)^2}}\r)^q\lf(\lf|u\circ R_1(x)\r|^q+\lf|u\circ\mathcal R(x)\r|^q\r)dx\\
+C\sum_k\int_{\mathcal A_k\setminus\lf(\overline{\mathcal D^L_k}\cup\overline{\mathcal D^U_k}\r)}\lf(\frac{1}{r_k}\r)^q\lf(\lf|u\circ R_1(x)\r|^q+\lf|u\circ\mathcal R(x)\r|^q\r)dx,
\end{multline}
Since the collection $\{P_k\}$ is pairwise disjoint, the H\"older inequality gives
\begin{equation}\label{eq42}
\sum_k\int_{P_k}\lf|\nabla u(x)\r|^qdx\leq C\lf(\int_{\boz_{p,q}}\lf|\nabla u(x)\r|^pdx\r)^{\frac{q}{p}}.
\end{equation}
The H\"older inequality gives 
\begin{multline}\label{eq43}
\sum_k\int_{\mathcal D^U_k}\lf(\sqrt{\frac{1}{\lf(1-x_n\r)^2+\lf(s-\frac{r_k}{2}\r)^2}}\r)^q\lf(\lf|u\circ R_1(x)\r|^q+\lf|u\circ\mathcal R(x)\r|^q\r)dx\\
\leq\lf(\sum_k\int_{\mathcal D^U_k}\lf(\sqrt{\frac{1}{\lf(1-x_n\r)^2+\lf(s-\frac{r_k}{2}\r)^2}}\r)^{\frac{pq}{p-q}}dx\r)^{\frac{p-q}{p}}\\
\times\lf(\sum_k\int_{\mathcal D_k}\lf(\lf|u\circ R_1(x)\r|^p+\lf|u\circ\mathcal R(x)\r|^p\r)dx\r)^{\frac{q}{p}}.
\end{multline}
By (\ref{eq:esti1}) and (\ref{eq:esti4}), the change of variables formula gives 
\begin{multline}\label{eq44}
\sum_k\int_{\mathcal D_k}\lf(\lf|u\circ R_1(x)\r|^p+\lf|u\circ\mathcal R(x)\r|^p\r)dx\\
\leq C\lf(\int_{R_1\lf(\bigcup_k\mathcal D_k\r)}\lf|u(x)\r|^pdx+\int_{\mathcal R\lf(\bigcup_k\mathcal D_k\r)}\lf|u(x)\r|^pdx\r)\\
\leq C\int_{\boz_{p,q}}\lf|u(x)\r|^pdx.
\end{multline}
With 
\[l_k:=\sqrt{{\lf(1-x_n\r)^2+\lf(s-\frac{r_k}{2}\r)^2}},\]
by (\ref{eq:radius}), we have
\begin{eqnarray}\label{eq45}
\sum_k\int_{\mathcal D^U_k}\lf(\sqrt{\frac{1}{\lf(1-x_n\r)^2+\lf(s-\frac{r_k}{2}\r)^2}}\r)^{\frac{pq}{p-q}}dx&\leq&C\sum_kr_k\int_0^{r_k}l_k^{n-2-\frac{pq}{p-q}}dl_k\\
    &\leq&C\sum_kr_k^{n-\frac{pq}{p-q}}<\fz.\nonumber
\end{eqnarray}
Hence, by combining (\ref{eq43}), (\ref{eq44}) and (\ref{eq45}), we obtain 
\begin{multline}\label{eq46}
\sum_k\int_{\mathcal D^U_k}\lf(\sqrt{\frac{1}{\lf(1-x_n\r)^2+\lf(s-\frac{r_k}{2}\r)^2}}\r)^q\lf(\lf|u\circ R_1(x)\r|^q+\lf|u\circ\mathcal R(x)\r|^q\r)dx\\
\leq C\lf(\int_{\boz_{p, q}}\lf|u(x)\r|^pdx\r)^{\frac{q}{p}}.
\end{multline}
By replacing $(1-x_n)$ by $x_n$ and $\mathcal D^U_k$ by $\mathcal D^L_k$, the same argument gives 
\begin{multline}\label{eq47}
\sum_k\int_{\mathcal D^L_k}\lf(\sqrt{\frac{1}{x_n^2+\lf(s-\frac{r_k}{2}\r)^2}}\r)^{\frac{pq}{p-q}}\lf(\lf|u\circ R_1(x)\r|^q+\lf|u\circ\mathcal R(x)\r|^q\r)dx\\
\leq C\lf(\int_{\boz_{p,q}}\lf|u(x)\r|^pdx\r)^{\frac{q}{p}}.
\end{multline}
The H\"older inequality gives 
\begin{multline}\label{eq48}
\sum_k\int_{\mathcal A_k\setminus\lf(\overline{\mathcal D^L_k}\cup\overline{\mathcal D^U_k}\r)}\lf(\frac{1}{r_k}\r)^q\lf(\lf|u\circ R_1(x)\r|^q+\lf|u\circ\mathcal R(x)\r|^q\r)dx\\
\leq\lf(\sum_k\int_{\mathcal A_k\setminus\lf(\overline{\mathcal D^L_k}\cup\overline{\mathcal D^U_k}\r)}\lf(\lf|u\circ R_1(x)\r|^p+\lf|u\circ\mathcal R(x)\r|^p\r)dx\r)^{\frac{q}{p}}\\
\times\lf(\sum_k\int_{\mathcal A_k\setminus\lf(\overline{\mathcal D^L_k}\cup\overline{\mathcal D^U_k}\r)}\lf(\frac{1}{r_k}\r)^{\frac{pq}{p-q}}dx\r)^{\frac{p-q}{p}}.                
\end{multline}
By (\ref{eq:radius}), a simple computation gives
\begin{equation}\label{eq49}
\sum_k\int_{\mathcal A_k\setminus\lf(\overline{\mathcal D^L_k}\cup\overline{\mathcal D^U_k}\r)}\lf(\frac{1}{r_k}\r)^{\frac{pq}{p-q}}dx\leq C\sum_kr_k^{n-1-\frac{pq}{p-q}}<\fz.
\end{equation}
By (\ref{eq:esti1}) and (\ref{eq:esti4}), the change of variables formula implies
\begin{multline}\label{eq50}
\sum_k\int_{\mathcal A_k\setminus\lf(\overline{\mathcal D^L_k}\cup\overline{\mathcal D^U_k}\r)}\lf(\lf|u\circ R_1(x)\r|^p+\lf|u\circ\mathcal R(x)\r|^p\r)dx\\
\leq C\int_{R_1\lf(\mathcal A_k\setminus\lf(\overline{\mathcal D^L_k}\r)\r)}\lf|u(x)\r|^pdx+\int_{\mathcal R\lf(\mathcal A_k\setminus\lf(\overline{\mathcal D^L_k}\cup\overline{\mathcal D^U_k}\r)\r)}\lf|u(x)\r|^pdx\\
\leq C\int_{\boz_{p, q}}\lf|u(x)\r|^pdx.
\end{multline}
By combining (\ref{eq48}), (\ref{eq49}) and (\ref{eq50}), we obtain 
\begin{equation}\label{eq51}
\sum_k\int_{\mathcal A_k\setminus\lf(\overline{\mathcal D^L_k}\cup\overline{\mathcal D^U_k}\r)}\lf(\frac{1}{r_k}\r)^q\lf(\lf|u\circ R_1(x)\r|^q+\lf|u\circ\mathcal R(x)\r|^q\r)dx\leq C\lf(\int_{\boz_{p, q}}\lf|u(x)\r|^pdx\r)^{\frac{q}{p}}.
\end{equation}
By combining (\ref{eq41}), (\ref{eq42}), (\ref{eq46}), (\ref{eq47}) and (\ref{eq51}), we obtain 
\begin{equation}\label{eq52}
\int_{\bigcup_k\mathcal A_k}\lf|\nabla E(u)(x)\r|^qdx\leq C\lf(\int_{\boz_{p, q}}\lf(\lf|u(x)\r|^p+\lf|\nabla u(x)\r|^p\r)dx\r)^{\frac{q}{p}},
\end{equation}
where the constant $C$ is independent of $u$. Finally, by combining (\ref{eq21}), (\ref{eq22}), (\ref{eq26}), (\ref{eq40}) and (\ref{eq52}), we obtain the desired norm control for the derivative of $E(u)$. That is
\begin{equation}\label{eq53}
\lf(\int_{\mathcal C}\lf|\nabla E(u)(x)\r|^qdx\r)^{\frac{1}{q}}\leq C\lf(\int_{\boz_{p, q}}\lf(\lf|u(x)\r|^p+\lf|\nabla u(x)\r|^p\r)dx\r)^{\frac{1}{p}}
\end{equation}
with a constant $C$ independent of $u$.

By combining (\ref{eq20}) and (\ref{eq53}), we know that the extension operator $E$ defined in (\ref{eq:extension}) is bounded from the dense subspace $C^\fz(\overline{\boz_{p,q}})\cap W^{1, p}(\boz_{p, q})$ of $W^{1, p}(\boz_{p, q})$ to $W^{1, q}(\mathcal C)$. This means that for every $u\in C^\fz(\overline{\boz_{p,q}})\cap W^{1, p}(\boz_{p,q})$, we have $E(u)\in W^{1, q}(\mathcal C)$ with 
\[\|E(u)\|_{W^{1, q}(\mathcal C)}\leq C\|u\|_{W^{1, p}(\boz_{p, q})},\]
where the constant $C$ is independent of $u$. Since $C^{\fz}(\overline{\boz_{p,q}})\cap W^{1, p}(\boz_{p, q})$ is dense in $W^{1, p}(\boz_{p, q})$, for every $u\in W^{1, p}(\boz_{p, q})$, there exists a Cauchy sequence $\{u_m\}\subset C^\fz(\overline{\boz_{p,q}})\cap W^{1, p}(\boz_{p,q})$ which converges to $u$ in $W^{1, p}(\boz_{p,q})$. Furthermore, there exists a subsequence of $\{u_m\}$ which converges to $u$ almost everywhere in $\boz_{p, q}$. To simplify the notation, we still use $\{u_m\}$ for this subsequence. By (\ref{eq53}), $\{E(u_m)\}$ is a Cauchy sequence in the Sobolev space $W^{1, q}(\mathcal C)$ and hence converges to some function $v\in W^{1, q}(\mathcal C)$ with respect to $W^{1, q}$-norm. Furthermore, there exists a subsequence of $\{E(u_m)\}$ which converges to $v$ almost everywhere in $\mathcal C$. On the other hand, by the definition of $E(u_m)$ and $E(u)$, we have 
\[\lim_{m\to\fz}E(u_m)(x)=E(u)(x)\]
for almost every $x\in\mathcal C$. Hence $v(x)=E(u)(x)$ almost everywhere. This implies that $E(u)\in W^{1, q}(\mathcal C)$ with 
\begin{eqnarray}\label{eq54}
\|E(u)\|_{W^{1, q}(\mathcal C)}&=&\|v\|_{W^{1, q}(\mathcal C)}=\lim_{m\to\fz}\|E(u_m)\|_{W^{1, q}(\mathcal C)}\\
&\leq&C\lim_{m\to\fz}\|u\|_{W^{1, p}(\boz_{p,q})}\leq C\|u\|_{W^{1,p}(\boz_{p,q})},\nonumber
\end{eqnarray}
where the constant $C$ is independent of $u$. Hence, the linear extension operator $E$ defined in (\ref{eq:extension}) is bounded from $W^{1,p}(\boz_{p,q})$ to $W^{1, q}(\mathcal C)$. The fact that $\mathcal C$ is a Sobolev $(q, q)$-extension domain implies $\boz_{p,q}$ is a Sobolev $(p, q)$-extension domain.
\end{proof}

Next, we show that $\boz_{p, q}$ is not a $(L^{1, p}, L^{1, q})$-extension domain for $1\leq q<n-1$ and $(n-1)p/(n-1-q)<p<\fz$.
\begin{thm}\label{th:nothomo}
 Fix $1\leq q<n-1$ and $(n-1)q/(n-1-q)<p<\fz$. Then $\boz_{p, q}\subset\rn$ defined in (\ref{eq:boz}) is not an $(L^{1, p}, L^{1, q})$-extension domain.
\end{thm}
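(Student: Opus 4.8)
The plan is to argue by contradiction: suppose there is a bounded extension operator $E\colon L^{1,p}(\boz_{p,q})\to L^{1,q}(\rn)$, and test it against a family of functions concentrated on the individual mushroom heads. For $k\in\nn$ let $u_k$ be the function on $\boz_{p,q}$ that equals $0$ on $\mathcal Q_0$ and on every $\mathscr C_j\cup\mathcal C_j$ with $j\neq k$, equals $1$ on the head $\mathscr C_k$, and on the stem $\mathcal C_k=\mathcal B_k\times[1,h_k]$ (so $h_k\in\{\tfrac32,2\}$) interpolates affinely in the $x_n$-variable from $0$ on $\{x_n=1\}$ to $1$ on $\{x_n=h_k\}$. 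These affine pieces match the adjacent constant values across the flat interfaces, so $u_k$ is continuous and bounded on $\boz_{p,q}$ and smooth off a finite union of hyperplanes; hence $u_k\in W^{1,\infty}(\boz_{p,q})\subset L^{1,p}(\boz_{p,q})$, with $\nabla u_k$ supported in $\mathcal C_k$ and $|\nabla u_k|\le 2$ there, since $h_k-1\ge\tfrac12$. Because $|\mathcal C_k|\approx r_k^{\,n-1}$ by \eqref{eq:radius},
\[
\|u_k\|_{L^{1,p}(\boz_{p,q})}=\Big(\int_{\mathcal C_k}|\nabla u_k|^p\,dx\Big)^{1/p}\le C\,r_k^{\,(n-1)/p},
\]
with $C$ independent of $k$.

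Next I would estimate $\|E(u_k)\|_{L^{1,q}(\rn)}$ from below. Put $v_k:=E(u_k)$. Since $v_k$ is weakly differentiable on $\rn$ with $\nabla v_k\in L^q(\rn)$, it lies in $W^{1,1}_{\mathrm{loc}}(\rn)$, hence admits a representative that is absolutely continuous on almost every line parallel to the $x_n$-axis. The point is that $\mathscr B_k\subset\mathscr Q_0$: indeed $\mathscr B_k=B^{n-1}(z_k,\tilde r_k)$ is the ball inscribed in the subcube $\mathscr Q_k=\mathscr Q(z_k,\tilde r_k)\subset\mathscr Q_0$, because $\tilde r_k$ is the half side-length of $\mathscr Q_k$. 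Thus, for every $x'\in\mathscr B_k$, the vertical line $\{x'\}\times\rr$ meets $\mathcal Q_0=\mathscr Q_0\times(0,1)$, where $v_k=u_k=0$ a.e., and the head $\mathscr C_k=\mathscr B_k\times I_k$ with $\overline{I_k}\subset(1,3]$, where $v_k=u_k=1$ a.e. By Fubini's theorem, for a.e.\ $x'\in\mathscr B_k$ the absolutely continuous function $t\mapsto v_k(x',t)$ attains the value $0$ at some point of $[0,1]$ and the value $1$ at some point of $\overline{I_k}$, so $1\le\int_0^3|\nabla v_k(x',t)|\,dt$ by the fundamental theorem of calculus. Integrating over $x'\in\mathscr B_k$ and using Hölder's inequality on $\mathscr B_k\times(0,3)$ yields
\[
|\mathscr B_k|\le\int_{\mathscr B_k\times(0,3)}|\nabla v_k|\,dx\le C\Big(\int_{\rn}|\nabla v_k|^q\,dx\Big)^{1/q}|\mathscr B_k|^{1-\frac1q},
\]
so that $\|E(u_k)\|_{L^{1,q}(\rn)}=\|\nabla v_k\|_{L^q(\rn)}\ge c\,|\mathscr B_k|^{1/q}\ge c\,\tilde r_k^{\,(n-1)/q}$.

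Combining the two estimates with the boundedness of $E$ forces $\tilde r_k^{\,(n-1)/q}\le C\,r_k^{\,(n-1)/p}$ for all $k\in\nn$. By \eqref{eq:radius}, $\tilde r_k=2^{-(k+1)}$ and
\[
r_k^{\,(n-1)/p}=\big(4^{-k}\big)^{\frac{n-1}{p}\left(\frac1{n-1}+\frac pq\right)}=2^{-2k\left(\frac1p+\frac{n-1}q\right)},
\]
so the inequality becomes $2^{-(k+1)(n-1)/q}\le C\,2^{-2k\left(\frac1p+\frac{n-1}q\right)}$. Since $\frac2p+\frac{n-1}q>0$, the right-hand side tends to $0$ strictly faster than the left-hand side, so this fails for all large $k$. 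This contradiction shows that $\boz_{p,q}$ is not an $(L^{1,p},L^{1,q})$-extension domain.

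I expect the only delicate step to be the line-integration argument: one must verify that $v_k$ has a genuine ACL representative, that a full-measure set of the vertical lines through $\mathscr B_k$ really do reach the base $\mathcal Q_0$ — this is precisely why it matters that $\mathscr B_k$ lies over $\mathscr Q_0$ — and that the almost-everywhere identities $v_k=0$ on $\mathcal Q_0$ and $v_k=1$ on $\mathscr C_k$ upgrade, via Fubini together with absolute continuity on lines, to honest pointwise values of $t\mapsto v_k(x',t)$ at suitable heights. The remaining steps are just Hölder's inequality and the arithmetic with the radii in \eqref{eq:radius}. It is worth noting that, in contrast to the positive direction (Theorem \ref{th:Sobexten}), this half of the argument uses neither the restriction $q<n-1$ nor the lower bound on $p$; those hypotheses enter only to guarantee that $\boz_{p,q}$ is a $(W^{1,p},W^{1,q})$-extension domain.
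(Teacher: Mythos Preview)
Your argument is correct, but it takes a genuinely different route from the paper's. The paper builds a \emph{single} function $u\in L^{1,p}(\boz_{p,q})\setminus W^{1,q}(\boz_{p,q})$ by setting $u\equiv(4^k)^{(n-1)/q}$ on each head $\mathscr C_k$ (with the matching linear profile on the stems), and then observes that an $(L^{1,p},L^{1,q})$-extension would land in $L^{1,q}(B)=W^{1,q}(B)$ for a ball $B\supset\boz_{p,q}$, forcing $u\in L^q(\boz_{p,q})$; a direct computation shows this is false. You instead test the putative operator against a \emph{sequence} $\{u_k\}$ of bounded functions, one per mushroom, and extract a lower bound on $\|\nabla E(u_k)\|_{L^q}$ from an ACL line-integration (capacity-type) argument through the vertical fibres over $\mathscr B_k$. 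The paper's route is shorter and offloads the analysis onto the identification $L^{1,q}(B)=W^{1,q}(B)$; your route is more self-contained, makes the competing scales $\tilde r_k^{(n-1)/q}$ versus $r_k^{(n-1)/p}$ explicit, and would survive even if one only assumed a uniform norm bound rather than an actual extension operator. Your closing observation that neither $q<n-1$ nor the lower bound on $p$ is used in this direction is also true of the paper's proof.
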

\begin{proof}
Fix $1\leq q<n-1$ and $(n-1)q/(n-1-q)<p<\fz$. Assume that $\boz_{p, q}$ is a $(L^{1, p}, L^{1, q})$-extension domain, then for every $u\in L^{1, p}(\boz_{p, q})$, there exists a function $E(u)\in L^{1, q}(B(0, 10))$ with $E(u)\big|_{\boz_{p,q}}\equiv u$ and 
\[\|\nabla E(u)\|_{L^q(B(0, 10))}\leq C\|\nabla u\|_{L^p(\boz_{p, q})}\]
with a positive constant $C$ independent of $u$. By Theorem \ref{th:poincare}, we have 
$$L^{1, q}(B(0, 10))=W^{1, q}(B(0, 10))$$
as a set of functions. Hence we have $E(u)\in W^{1, q}(B(0, 10))$ and $u\in L^{1, q}(\boz_{p, q})$. Hence, to prove the theorem, it suffices to construct a function $u$ which is in the homogeneous Sobolev space $L^{1, p}(\boz_{p, q})$ but not in the Sobolev space $W^{1, q}(\boz_{p, q})$.  Set $u\equiv 0$ on $\mathcal Q_o$ and $u\equiv (4^k)^{\frac{n-1}{q}}$ on $\mathscr C_k$. On every $\mathcal C_k$, we define $u$ to be increasing linearly  with respect to $x_n$ from $0$ in $\mathcal Q_0$ to $(4^k)^{\frac{n-1}{q}}$ in $\mathscr C_k$. To be precise, 
\[u(x)=(4^k)^{\frac{n-1}{q}}x_n-(4^k)^{\frac{n-1}{q}}\]
for every $x=(x_1,x_2,\cdots, x_n)\in\mathcal  C_k$. By some simple computations, we have 
\begin{equation}\label{eq:derivative}
|\nabla u(x)|=\begin{cases}
(4^k)^{\frac{n-1}{q}},\ &\ {\rm if}\ x\in \mathcal C_k, \\
0,\ &\ {\rm elsewhere}.
\end{cases}
\end{equation}
Hence, we have 
\begin{eqnarray}
\int_{\boz_{p,q}}|\nabla u(x)|^pdx&\leq&\sum_{k=1}^\fz(4^k)^{\frac{(n-1)p}{q}}\mathcal H^n\lf(\mathcal C_k\r)\nonumber\\
&\leq&C\sum_{k=1}^\fz(4^k)^{\frac{(n-1)p}{q}}\lf(\frac{1}{4^{k+1}}\r)^{1+\frac{(n-1)p}{q}}\nonumber\\
&\leq&C\sum_{k=1}^\fz\frac{1}{4^{k+1}}<\fz.\nonumber
\end{eqnarray}
Hence $u\in L^{1, p}(\boz_{p, q})$. By a simple computation, we have 
\begin{eqnarray}
\int_{\boz_{p, q}}|u(x)|^qdx&\geq&\sum_{k=1}^\fz\int_{\mathcal Q_k\cap\boz_{p, q}}|u(x)|^qdx\nonumber\\
&\geq&\sum_{k=1}^\fz\lf(\frac{1}{4^{k+1}}\r)^{n-1}(4^k)^{n-1}=\fz.\nonumber
\end{eqnarray}
Hence, we also have $u\notin W^{1, q}(\boz_{p, q})$ as desired.
\end{proof}
\section{Construction of an Example}\label{difdomain}
In this section, we construct a domain $\boz\subset\rn$ with $n\geq 3$ which shows that $\frac{(n-1)q}{(n-1-q)}<p< \infty$ is a sharp requirement above.\\

Let $\mathcal Q_0:=[0, 20]\times[0, 1]^{n-1}\subset\rn$ be a closed rectangle in the Euclidean space $\rn$. Let $\mathscr Q_0:=[0, 20]\times[0, 1]^{n-2}\times\{0\}$ be the lower $(n-1)$-dimensional surface of the rectangle $\mathcal Q_0$. We define a sequence of points $\{z_k\in\mathscr Q_0\}_{k=1}^\fz$ by setting 
\[z_1:=(1, 0, \cdots, 0)\ {\rm and}\ z_k:=\lf(1+30\sum_{j=2}^k 2^{-j}, 0, \cdots, 0\r)\]
for every $2\leq k<\fz$.

For every point $x=(x_1, x_2,\cdots, x_n)\in\rn$, define $\check{x}:=(x_1, x_2, \cdots, x_{n-1})\in\rr^{n-1}$ to be the projection of $x$ onto the $(n-1)$-dimensional hyperplane $\rr^{n-1}\times\{0\}$. 
We define a cylinder $\mathcal{C}_k$ which by setting
\[\mathcal C_k:=\lf\{x\in\rn: 0\leq|\check x-z_k|\leq 2^{-k-1}, -1\leq x_n\leq 0\r\}.\]
Similarly, we define $\mathcal{C}_{k,\frac{1}{2}}$ by setting
\[\mathcal C_{k,\frac{1}{2}}:=\lf\{x\in\rn: 0\leq|\check x-z_k|\leq 2^{-k-1}, -1\leq x_n\leq -\frac{1}{2}\r\}.\]
Then we define the domain $\boz$ to be the interior of the set 
\[F:=\mathcal Q_0\cup\bigcup_{k=1}^\fz\mathcal{C}_{k}.\]

We define $u_k$ on $\boz$ by setting
\begin{equation}
    u_k(x):= \begin{cases}
        0, \  &{\rm if}\ \ x\in \Omega\setminus \mathcal{C}_{k},\\
        1, \ &{\rm if}\ \ x\in \mathcal{C}_{k,\frac{1}{2}},\\
        2x_n,  \  &{\rm elsewhere}.
    \end{cases}
\end{equation}
Let $1\leq p<\infty$ and $1\leq q<n-1.$ Then,
\begin{equation}\label{spq}
    \|u_k\|_{W^{1,p}(\Omega)}\leq C_n \lf(2^{\frac{-(k+2)(n-1)}{p}}\r).
\end{equation}
Let $T_t$ for $t\in \mathbb{R}$ be any hyperplane perpendicular to $\mathcal{C}_{k}$ that intersect the $\mathcal{C}_{k,\frac{1}{2}}$ and let $Eu_{k}$ be an extension of $u_k.$
 By the Sobolev embedding inequality, we have for a.e. $t\in \mathbb{R}$
\begin{equation}\label{peq}
    2^{\frac{-(k+2)(n-1)}{q_{n-1}^*}}\leq C_{n,q}\left(\int_{T_t}|\nabla Eu_k(x)|^qdx\right)^{\frac{1}{q}}
\end{equation}
where $q_{n-1}^*= \frac{q(n-1)}{(n-1-q)}.$\\
By using \eqref{peq}, we get that
\begin{equation}\label{sppeq}
    2^{\frac{-(k+2)(n-1)}{q_{n-1}^*}}\leq C_{n,q}\|Eu_k\|_{W^{1,q}(\mathbb{R}^n)}.
\end{equation}
Combining \eqref{spq} and \eqref{sppeq}, we conclude that
\begin{eqnarray}
    \frac{\|Eu_k\|_{W^{1,q}(\mathbb{R}^n)}}{\|u_k\|_{W^{1,p}(\Omega)}}&\geq& C_{p,q,n}2^{-(k+2)\left(\frac{(n-1)}{q_{n-1}^\star}-\frac{(n-1)}{p}\right)}.
\end{eqnarray}
For $p<\frac{q(n-1)}{(n-1-q)}$,
\begin{equation*}
    \frac{\|Eu_k\|_{W^{1,q}(\mathbb{R}^n)}}{\|u_k\|_{W^{1,p}}(\Omega)}\to \infty \hspace{2mm} \text{as} \hspace{2mm} k\to \infty.
\end{equation*}
Therefore, a $(W^{1,p},W^{1,q})$-extension is not possible unless $p\geq \frac{q(n-1)}{(n-1-q)}$ where $1\leq q<n-1.$ One can check also that no $(W^{1,p},W^{1,q})$-extension possible also when $q\geq n-1.$ 

 
\section{Example shown in the main proof is strong}
Let $\Omega$ be the domain as constructed in \eqref{eq:boz}. For $k\in \mathbb{N}$, we define $u$ on $\boz$ by setting
\begin{equation}
    u_k(x):= \begin{cases}
        0, \  &{\rm if}\ \ x\in \Omega\setminus \mathcal{C}_{k}\cup \mathscr C_k,\\
        1, \ &{\rm if}\ \ x\in \mathscr C_k,\\
        x_n-2,  \  &{\rm elsewhere}.
    \end{cases}
\end{equation}
Now, 
\begin{equation}
    \|u_k\|_{W^{1,p}(\Omega)}= \|u_k\|_{L^p{(\Omega)}}+\|\nabla u_k\|_{L^p(\Omega)},
\end{equation}
which implies that
\begin{eqnarray}
     \|u_k\|_{W^{1,p}(\Omega)} &\leq & C\left[\left(\frac{1}{2^{k+1}}\right)^{\frac{n-1}{p}}+ \left(\frac{1}{4^{k}}\right)^{\left(\frac{1}{(n-1)}+\frac{p}{q}\right)\frac{(n-1)}{p}}\right]
\end{eqnarray}
Since $\frac{1}{p}+\frac{n-1}{q}$ is positive, we get that
\begin{equation}\label{eqp}
         \|u_k\|_{W^{1,p}(\Omega)} \leq C\left[\left(\frac{1}{2^{k+1}}\right)^{\frac{n-1}{p}}\right]
\end{equation}
Let $T_t$ for $t\in \mathbb{R}$ be any hyperplane perpendicular to $\mathcal{C}_{k}$ that intersect and let $Eu_{k}$ be extension of $u_k$.
 By Sobolev embedding inequality, we have for a.e. $t\in \mathbb{R}$
\begin{equation}\label{eqpe}
    2^{\frac{-(k+2)(n-1)}{q_{n-1}^*}}\leq C_{n,q}\left(\int_{T_t}|\nabla Eu_k(x)|^qdx\right)^{\frac{1}{q}}
\end{equation}
where $q_{n-1}^*= \frac{q(n-1)}{(n-1-q)}.$\\
By using \eqref{eqpe}, we get that
\begin{equation}\label{eqpp}
   2^{\frac{-(k+2)(n-1)}{q_{n-1}^*}}\leq C_{n,q}\leq \|Eu_k\|_{W^{1,q}(\mathbb{R}^n)}. 
\end{equation}
 Combining \eqref{eqp} and \eqref{eqpp}, we get that
\begin{eqnarray}\label{meq}
\frac{\|Eu_{k}\|_{W^{1,q}(\mathbb{R}^n)}}{\|u_{k}\|_{W^{1,p}(\Omega)}}&\geq& C_{p,q,n}2^{-(k+2)\left(\frac{(n-1)}{q_{n-1}^\star}-\frac{(n-1)}{p}\right)}
\end{eqnarray}
For $p< q^\star_{n-1}$, \eqref{meq} tends to infinity as $k$ tends to infinity.
\begin{remark}
    If we consider the domain $\widetilde{\Omega}$ a slightly modification of the domain as constructed in \eqref{difdomain}. We will fix the cube $\mathcal{Q}_0$ and attach cylinders $\mathcal{C}_k$ to it such the the ratio of width and height of the cylinder goes to zero as $k$ goes to infinity. Then $\widetilde{\Omega}$ is not a $(W^{1,p}, W^{1,q})$ extension domain if $1\leq q <n-1$ and $p< \frac{q(n-1)}{(n-1-q)}$.  
\end{remark}

\end{document}